\documentclass[12pt, onecolumn]{article} 

\usepackage[dvipdfmx]{graphicx}
\usepackage{amssymb,latexsym,amsmath,epsfig,amsthm} 

\usepackage[utf8]{inputenc} 
\usepackage[T1]{fontenc}
\usepackage{url}
\usepackage{ifthen}
\usepackage{cite}
\usepackage{algorithm}
\usepackage{algorithmic}
\usepackage{subfigure}
\usepackage{tikz}
\usepackage{algorithm}
\usepackage{algorithmic}
\usepackage{amsmath}
\usepackage{amssymb}
\usepackage{stmaryrd}
\usepackage{ecltree}
\usepackage{enumitem}
\usepackage{mathrsfs}
\usepackage{xcolor}
\usepackage{comment}
\usepackage{tikz}
\usepackage{bm}
\usepackage{bbm}
\usepackage{booktabs}
\usepackage{wrapfig}
\usepackage{mathtools}
\mathtoolsset{showonlyrefs=true}
\usetikzlibrary{trees}

\usepackage[normalem]{ulem}
\usepackage{here}
\usepackage{cases}
\usepackage{stackengine}
\usepackage{longtable}

\def\BibTeX{{\rm B\kern-.05em{\sc i\kern-.025em b}\kern-.08em
    T\kern-.1667em\lower.7ex\hbox{E}\kern-.125emX}}

\newtheorem{theorem}{Theorem}[section]
\newtheorem{lemma}[theorem]{Lemma}
\newtheorem{proposition}[theorem]{Proposition}
\newtheorem{corollary}[theorem]{Corollary}

\theoremstyle{definition}
\newtheorem{definition}[theorem]{Definition}

\newtheorem{example}[theorem]{Example}

\newcommand{\NN}{{\mathscr{N}}}
\newcommand{\PP}{{\mathscr{P}}}
\newcommand{\es}{{\emptyset}}

\newcommand{\st}{{\ast}}

\newcommand{\osum}{\mathbin{:}}
\newcommand{\add}[3]{{#1}\osum_{{#3}}{#2}}

\newcommand{\uint}{\mathbb{Z}_{\geq 0}}
\newcommand{\pint}{\mathbb{Z}_{\geq 1}}
\newcommand{\short}{\tilde{\mathbb{G}}}

\newcommand{\birth}{\mathrm{b}}
\newcommand{\grundy}{\mathcal{G}}
\newcommand{\vset}{\mathcal{V}}
\DeclareMathOperator*{\mex}{mex}

\newcommand{\eqlab}[2]{\overset{(\mathrm{#1})}{#2}}
\newcommand{\defeq}[1]{\overset{\mathrm{def}}{#1}}
\newcommand{\wh}[1]{\widehat{#1}}

\title{Ordinal Sums with Substitution of \\Impartial Games}
\author{Kengo Hashimoto}
\date{}

\begin{document}
\maketitle

\begin{abstract}
A combinatorial game is a two-player game without hidden information or chance elements.
The disjunctive sum $G + H$ of games $G$ and $H$ is the game in which
$G$ and $H$ are played in parallel, and a player makes a move on exactly one of $G$ and $H$ in a turn.
The ordinal sum $G \osum H$ is similar to the disjunctive sum,
but once the left game $G$ is played, the right game $H$ is discarded and can no longer be played.
It is known that the outcome of a mixture of disjunctive sums and ordinal sums, such as $(G_1 \osum G_2) + ((G_3 + G_4) \osum G_5)$, is determined by the variation sets, the set of Grundy numbers of all options, of the components in the normal-play.
In this paper, we propose a generalization of an ordinal sum, called an ordinal sum with substitution $\add{G}{H}{\wh{H}}$,
which is the game made by combining $G$, $H$, and $\wh{H}$ in the following way:
the games $G$ and $H$ are played in parallel;
a player makes a move on exactly one of $G$ and $H$ in a turn;
each time the left game $G$ is played, the right game $H$ is replaced with $\wh{H}$.
We investigate their fundamental properties and 
prove a simple formula for the variation sets of ordinal sums with substitution.
Apply the formula, we give an explicit expression of the Grundy number of a chain of ordinal sums with substitution consisting of nimbers.
We also provide an example illustrating the generalization of ordinal sums with substitution to poset structures.
\end{abstract}

\section{Introduction}

\subsection{Background}
A \emph{combinatorial game} is a two-player game without hidden information or chance elements. 
In a combinatorial game, two players take turns to make a move alternately.
If it is guaranteed that a winner is determined within a finite number of moves,
then exactly one of the two players has a winning strategy.
The main object of combinatorial game theory is to obtain the \emph{outcome}, which player has a winning strategy, of a given position of a combinatorial game.

The \emph{normal-play} (resp.~\emph{mis\`{e}re-play}) is the way to determine a winner in which a play continues until no further moves can be made, and the player who cannot move on the player's turn loses (resp.~wins). 
The normal-play is actively studied because of its rich structure,
and this paper also assumes the normal-play.

Positions in many well-known rulesets are naturally decomposed into sums of multiple smaller positions.
The \emph{disjunctive sum} $G + H$ of games $G$ and $H$ is the game in which
$G$ and $H$ are played in parallel, and a player makes a move on exactly one of $G$ and $H$ in a turn.
The \emph{ordinal sum} $G \osum H$ is similar to the disjunctive sum,
but once the left game $G$ is played, the right game $H$ is discarded and can no longer be played.

Games in which both players have the same set of possible moves are called \emph{impartial} games. 
The outcome of a disjunctive (resp.~ordinal) sum of impartial games is determined by
the \emph{Grundy numbers} \cite{Gru39, Spr35} (resp.~outcomes) of the components of the sum in the normal-play.
Moreover, the outcome of a mixture of disjunctive sums and ordinal sums is obtained from
the \emph{variation sets}, the set of Grundy numbers of all options, of the components\cite[pp.~219--220]{BCG18}.

\subsection{Contribution and Organization}
In this paper, we focus on impartial games
and propose a generalization of ordinal sums, called \emph{ordinal sums with substitution} $\add{G}{H}{\wh{H}}$,
which is the game made by combining $G$, $H$, and $\wh{H}$ in the following way:
the games $G$ and $H$ are played in parallel;
a player makes a move on exactly one of $G$ and $H$ in a turn;
each time the left game $G$ is played, then the right game $H$ is replaced with $\wh{H}$.

A play by Alice and Bob that begins with $\add{G}{H}{\wh{H}}$ proceeds as follows, for example.
\begin{enumerate}
\item Alice plays the right game $H$ to $H'$. The entire game becomes $\add{G}{H'}{\wh{H}}$.
\item Bob plays the right game $H'$ to $H''$. The entire game becomes $\add{G}{H''}{\wh{H}}$.
\item Alice plays the left game $G$ to $G'$. Since the left game is played, the right game $H''$ is replaced with $\wh{H}$,
so that the entire game becomes $\add{G'}{\wh{H}}{\wh{H}}$.
\item Bob plays the right game $\wh{H}$ to $\wh{H}'$. The entire game becomes $\add{G}{\wh{H}'}{\wh{H}}$.
\item Alice plays the left game $G'$ to $G''$. Since the left game is played, the right game $\wh{H'}$ is replaced with $\wh{H}$,
so that the entire game becomes $\add{G''}{\wh{H}}{\wh{H}}$.
\item Bob plays the left game $G''$ to $G'''$. Since the left game is played, the right game $\wh{H}$ is replaced with $\wh{H}$, but nothing changes, so that the entire game becomes $\add{G'''}{\wh{H}}{\wh{H}}$.
\item They continue playing until the entire game reaches the end position $\add{\es}{\es}{\wh{H}}$.
\end{enumerate}

In Section \ref{subsec:iosum}, we introduce the formal definition of ordinal sums with substitution
and investigate their fundamental properties.

In Section \ref{subsec:iosum-vset}, as one of the main results of this paper,
we prove a simple formula for the variation set $\vset(\cdot)$ of an ordinal sum with substitution as follows, where the notation in the statement is formally introduced in Section \ref{sec:preliminaries}.

\theoremstyle{plain}
\newtheorem*{thm:iosum-vset}{\rm\bf Theorem~\ref{thm:iosum-vset}}
\begin{thm:iosum-vset}
For any $G, H, \wh{H} \in \short$, we have
\begin{align}
\vset(\add{G}{H}{\wh{H}}) = (\vset(G) + \mex\vset(\wh{H})) \osum \vset(H).
\end{align}
\end{thm:iosum-vset}

Using this theorem with the formula (Proposition \ref{prop:vset-sum} (i))
\begin{align}
\vset(G + H) = (\vset(G) \oplus \mex\vset(H)) \cup (\vset(H) \oplus \mex\vset(G))
\end{align}
enables us to obtain the variation set of a mixture of disjunctive sums and ordinal sums with substitution
such as $G_1 \osum_{G_2} (G_3 + (G_4 \osum_{G_5} G_6))$.

We also see the following corollaries to the theorem above on the outcomes $o(\cdot)$ and Grundy numbers $\grundy(\cdot)$ of ordinal sums with substitution.

\newtheorem*{cor:iosum-outcome}{\rm\bf Corollary~\ref{cor:iosum-outcome}}
\begin{cor:iosum-outcome}
For any $G, H, \wh{H} \in \short$, we have
\begin{align}
o(\add{G}{H}{\wh{H}}) =
\begin{cases}
\PP  &\,\,\text{if}\,\, o(H) = \PP \,\,\text{and}\,\, (o(G) = \PP \,\,\text{or}\,\, o(\wh{H}) = \NN),\\
\NN  &\,\,\text{otherwise}.
\end{cases}
\end{align}
\end{cor:iosum-outcome}

\newtheorem*{cor:iosum-grundy}{\rm\bf Corollary~\ref{cor:iosum-grundy}}
\begin{cor:iosum-grundy}~
\begin{enumerate}[label={\rm(\roman*)}]
\item For any $G, H_1, H_2, \wh{H} \in \short$,
if $\grundy(H_1) = \grundy(H_2)$, then $\grundy(\add{G}{H_1}{\wh{H}}) = \grundy(\add{G}{H_2}{\wh{H}})$.
\item For any $G, H, \wh{H}_1, \wh{H}_2 \in \short$,
if $\grundy(\wh{H}_1) = \grundy(\wh{H}_2)$, then $\grundy(\add{G}{H}{\wh{H}_1}) = \grundy(\add{G}{H}{\wh{H}_2})$.
\end{enumerate}
\end{cor:iosum-grundy}

In Section \ref{subsec:iosum-exam}, we provide examples of application of the above results.
We first show that in the case of a chain of nimbers $ \st a_0 \osum_{\st \wh{a}_1} \st a_1 \osum_{\st \wh{a}_2} \st a_2 \osum_{\st \wh{a}_3} \cdots \osum_{\st \wh{a}_n} \st a_n$, the Grundy number is given as follows,
where the operations in the chain are performed from left to right.

\newtheorem*{thm:iosum-star}{\rm\bf Theorem~\ref{thm:iosum-star}}
\begin{thm:iosum-star}
Let $n \in \pint$ and $a_0, a_1, a_2, \ldots, a_n, \wh{a}_1, \wh{a}_2, \ldots, \wh{a}_n \in \uint$ be arbitrary.
Let $p$ be the maximum $p' \in \{1, 2, \ldots, n\}$ such that $\sum_{i = p'}^n (a_i - \wh{a}_i) < 0$, where $p \defeq = 0$ if such $p'$ does not exist.
Then we have
\begin{align}
\grundy \left( \st a_0 \osum_{\st \wh{a}_1} \st a_1 \osum_{\st \wh{a}_2} \st a_2 \osum_{\st \wh{a}_3} \cdots \osum_{\st \wh{a}_n} \st a_n \right) = \sum_{i = p}^n a_i.
\end{align}
\end{thm:iosum-star}

For example, positions of the following ruleset fall into the setting of Theorem \ref{thm:iosum-star}.
\begin{quote}
Initially, there are $n+1$ boxes arranged in a row, referred to as Box $0$, Box $1$, $\ldots$, Box $n$.
For every $i = 0, 1, 2, \ldots, n$, Box $i$ contains $a_i$ stones and is labeled by $\wh{a}_i \in \uint$.
A player's move is to choose a non-empty box and remove one or more stones from the chosen box.
When a player makes a move on Box $i$, an appropriate number of stones are added or removed
to Box $(i+1)$, Box $(i+2)$, $\ldots$, Box $n$ so that the number of stones in each box
becomes $\wh{a}_{i+1}$, $\wh{a}_{i+2}$, $\ldots$, $\wh{a}_n$ written on that box, respectively.
\end{quote}

Also, positions of the following ruleset are reduced to a chain of nimbers in a slightly non-trivial way\footnote{This ruleset is inspired by a problem of competitive programming in \cite{ARC}.}.
\begin{quote}
There are $n$ tokens arranged in a row from left to right, and the $i$-th token from the left has an integer $p_i$ written on it,
where the sequence $(p_1, p_2, \ldots, p_n)$ is a permutation of the sequence $(0, 1, \ldots, n-1)$.
A token is called a \emph{record} if there is no token with a larger integer to its left.
A player's move is selecting one token that is not a record and moving it to the left end of the row.
\end{quote}
We refer to the token with integer $i$ as Token $i$ and define a sequence $(b_0, b_1, \ldots, b_{n-1})$ over $\{0, 1\}$ as
$b_i \defeq= 1$ if and only if Token $i$ is a non-record. 
Note that the operation of moving Token $i$ changes the states of Token $1$, Token $2$, $\ldots$, and Token $(i-1)$ from records to non-records if they are records.
Therefore, a player's move corresponds to choosing an integer $i$ such that $b_i = 1$ and changing the values of the sequence as
$b_0 = b_1 = \cdots = b_{i-1} = 1$ and $b_i = 0$.
In terms of ordinal sums with substitution, this can be interpreted as
the chain of nimbers $\st b_{n-1} \osum_{\st 1} \st b_{n-2} \osum_{\st 1} \cdots \osum_{\st 1} \st b_1 \osum_{\st 1} \st b_0$.
Applying Theorem~\ref{thm:iosum-star}, we can see that the Grundy number of a position is the minimum non-negative integer $i$ such that Token $i$ is a non-record, where it is $n$ if all the tokens are records.

We also consider a generalization of ordinal sums with substitution to a poset structure as follows.
\begin{quote}
A finite poset $(P, \preceq)$ is given.
To each element $x \in P$, an impartial game $G_x$ is assigned independently. 
On a turn, a player chooses an arbitrary $x \in P$ and makes a move on the associated game $G_x$.
At that time, for every $y \in P$ such that $y \preceq x$ and $y \neq x$,
the game $G_y$ is replaced with a game $\wh{G}_y$, which is fixed in advance for each $y \in P$.
\end{quote}
Note that a chain of ordinal sums with substitution $G_0 \osum_{\wh{G}_1} G_1 \osum_{\wh{G}_2} G_2 \osum_{\wh{G}_3} \cdots \osum_{\wh{G}_n} G_n$ corresponds to the special case of the poset being the chain $x_0 \succeq x_1 \succeq \cdots \succeq x_n$.
Depending on the structure of a given poset,
the corresponding game may be represented by recursively taking disjunctive sums and ordinal sums.
We provide an example of the recursive computation of the variation set for a poset structure represented by disjunctive sums and ordinal sums.

\section{Impartial Games}
\label{sec:preliminaries}

Before presenting the main results in Section \ref{sec:main},
we introduce the basic notation and briefly review known results on impartial games in this section.
We refer to the textbooks such as \cite{ANW19, BCG18, Con00, HG16, Sie13} for the detailed discussions
and omit the proofs of the propositions in this section.

Let $\uint$ (resp.~$\pint$) denote the set of all non-negative (resp.~positive) integers.

An impartial game $G$ is identified by the set of all possible transitions by a move from $G$.
More precisely, an element $G$ of the set $\short_n$ of all impartial games that terminate within $n$ moves is identified by
the set of all impartial games reached by a move from $G$, which is a subset of the set $\short_{n-1}$ of all games that terminate within $n-1$ moves.
The set $\short$ of all impartial games is the union of the sets $\short_n$ for all $n \in \uint$.
In this paper, we adopt the following formal definition.

\begin{definition}
\label{def:impartial}
For $n \geq \uint$, the set $\short_n$ is defined as
\begin{align}
\short_n \defeq{=}
\begin{cases}
\{\es\} &\,\,\text{if}\,\,n = 0,\\
2^{\short_{n-1}} &\,\,\text{if}\,\,n \geq 1,\\
\end{cases}
\end{align}
where $2^A$ denotes the power set of a set $A$.
A \emph{game}\footnote{By convention, we refer to an individual game position as a \emph{game} rather than a system of playable rules, such as Chess and Go. Instead, a system of playable rules is referred to as a \emph{ruleset}.}
is an element of
\begin{align}
\short \defeq{=} \bigcup_{n \in \uint} \short_n.
\end{align}
\end{definition}

Note that $\short_0 \subsetneq \short_1 \subsetneq \short_2 \subsetneq \cdots$ by the definition.
For $G \in \short$, the \emph{birthday} $\birth(G)$ of $G$ is defined as the minimum $n \in \uint$ such that $G \in \short_n$.
An element of a game $G$ is called an \emph{option} of $G$, which corresponds to a possible transition from the position by a move.
The empty game $\es$ corresponds to the end position in which neither player can make any more moves.
If $G, H \in \short$ are the same elements of $\short$, then we write $G \cong H$ instead of $G = H$ by convention.

The games in the next definition, called \emph{nimbers}, are fundamental.
\begin{definition}
\label{def:imp-star}
For $n \in \uint$, the game $\st n$ is defined recursively as
\begin{align}
\st n \defeq\cong \{\st n' \mid n' \in \uint, n' < n\}. \label{eq:y2wjh4pe0bmr}
\end{align}
In particular, $\st 0 \defeq\cong \es$.
\end{definition}

\begin{example}
\begin{align}
\short_0 &= \{\es\} = \{\st 0\},\\
\short_1 &= 2^{\short_0} = \{\es, \{\es\}\} = \{\st 0, \st 1\},\\
\short_2 &= 2^{\short_1} = \{\es, \{\es\}, \{\{\es\}\}, \{\es, \{\es\}\}\} = \{\st 0, \st 1, \{\st 1\}, \st 2\}.
\end{align}
\end{example}

\subsection{The Outcome}

For each game, exactly one player has a winning strategy.
Namely, one player can win by repeatedly ``making an appropriate response to the other player's move based on a certain strategy.''
The following mapping indicates which player has a winning strategy for a given game in the normal-play.

\begin{definition}
\label{def:imp-outcome}
We define a mapping $o \colon \short \to \{\PP, \NN\}$ recursively as
\begin{align}
o(G) \defeq=
\begin{cases}
\NN &\,\,\text{if}\,\, o(G') = \PP \,\,\text{for some}\,\, G' \in G,\\
\PP &\,\,\text{if}\,\, o(G') = \NN \,\,\text{for all}\,\, G' \in G.
\end{cases}
\end{align}
For $G \in \short$, $o(G)$ is called the \emph{outcome} of $G$.
\end{definition}

The condition $o(G) = \NN$ indicates that the first player (i.e., the player who makes a move on $G$ next) has a winning strategy in $G$, and $o(G) = \PP$ indicates that the second player does.

\subsection{Disjunctive Sums}

The disjunctive sum $G+H$ of games $G$ and $H$ is the game made by combining $G$ and $H$
in which $G$ and $H$ are played in parallel, and a player makes a move on exactly one of $G$ and $H$ in a turn.
The formal definition is as follows.

\begin{definition}
\label{def:imp-sum}
For $G, H \in \short$,
the \emph{disjunctive sum} $G + H$ of $G$ and $H$ is defined recursively as
\begin{align}
G+H &\defeq{\cong} \{G' + H \mid G' \in G\} \cup \{G + H' \mid H' \in H\}. \label{eq:imp-sum}
\end{align}
\end{definition}

The pair $(\short, +)$ forms a commutative monoid with identity $\es$ as follows.

\begin{proposition}~
\label{prop:imp-monoid}
\begin{enumerate}[label={\rm(\roman*)}]
\item For any $G, H, J \in \short$, we have $(G + H) + J \cong G + (H + J)$.
\item For any $G, H \in \short$, we have $G + H \cong H + G$.
\item For any $G \in \short$, we have $G + \es \cong G$.
\end{enumerate}
\end{proposition}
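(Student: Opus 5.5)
The three statements are proved by simultaneous structural induction on the combined birthday, using only Definition~\ref{def:imp-sum}. Before the induction I note the one fact that makes it go through: since every game in $\short$ is a hereditarily finite set, the birthday is well defined, and the options of $G+H$ are $G'+H$ and $G+H'$. Consequently $\birth(G'+H) < \birth(G+H)$ and $\birth(G+H') < \birth(G+H)$. This is itself a short induction on $\birth(G)+\birth(H)$. The same well-founded induction on $\birth(G)+\birth(H)+\birth(J)$, or on $\birth(G)+\birth(H)$, drives the three parts below. Equality $\cong$ is equality of sets, so in each part it suffices to show that the two sides have the same set of options.

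For (iii) I induct on $\birth(G)$. By definition
\begin{align}
G+\es \cong \{G'+\es \mid G'\in G\}\cup\{G+H' \mid H'\in\es\} = \{G'+\es \mid G'\in G\}.
\end{align}
The induction hypothesis gives $G'+\es\cong G'$ for every $G' \in G$, so this set is $\{G' \mid G'\in G\}=G$. The base case $G=\es$ is contained in this computation, since both sets are then empty.

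For (ii) I induct on $\birth(G)+\birth(H)$. The options of $G+H$ are $G'+H$ and $G+H'$, and by the induction hypothesis these equal $H+G'$ and $H'+G$ respectively. These are exactly the options of $H+G$, listed in the other order. For (i) I induct on $\birth(G)+\birth(H)+\birth(J)$ and compute the options of both sides. Unfolding the definition twice on the left gives
\begin{align}
(G+H)+J \cong \{(G'+H)+J\} \cup \{(G+H')+J\} \cup \{(G+H)+J'\},
\end{align}
with $G'$, $H'$, $J'$ ranging over the options of $G$, $H$, $J$. Unfolding the right side gives $G'+(H+J)$, $G+(H'+J)$ and $G+(H+J')$, where the last two come from the options $H'+J$ and $H+J'$ of $H+J$. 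Each of the three families matches its counterpart by the induction hypothesis, because in each case one component has strictly smaller birthday.

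The main care point is bookkeeping rather than any real obstacle. I have to justify that the induction measure strictly decreases. I also have to unfold the options of an inner sum correctly, for instance recognising that every option of $G+(H+J)$ of the form $G+K$ has $K$ equal to some $H'+J$ or $H+J'$. Since $\cong$ is literal set equality, no equivalence-class issues arise.
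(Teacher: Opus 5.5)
Your argument is correct and is the standard one. The paper omits the proof of this proposition and defers to textbooks, but your method (unfold the options and induct on the sum of the component birthdays) is exactly how it proves the analogous Proposition~\ref{prop:iosum}. One small point: your preliminary claim $\birth(G'+H)<\birth(G+H)$ is never used, since your measure drops simply because $\birth(G')<\birth(G)$ for $G'\in G$, and it is immediate anyway from $G'+H\in G+H$. What genuinely needs an induction is that $G+H\in\short$ at all, e.g.\ $G+H\in\short_{\birth(G)+\birth(H)}$, proved as in Proposition~\ref{prop:iosum-short}.
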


By Proposition \ref{prop:imp-monoid} (i), we can write $(G+H)+J$ (equivalently, $G+(H+J)$) as $G+H+J$ without ambiguity.

As stated in Proposition \ref{prop:grundy} below,
the outcome of a disjunctive sum $G_1 + G_2 + \cdots + G_n$ of games
is determined by the Grundy number, in the next definition, of each component $G_i$.

\begin{definition}
\label{def:grundy}
For $G \in \short$, the \emph{Grundy number} $\grundy(G)$ of $G$ is defined recursively as
\begin{align}
\grundy(G) \defeq= \mex\{ \grundy(G') \mid G' \in G\},
\end{align}
where $\mex A \defeq= \min (\uint \setminus A)$ for $A \subsetneq \uint$.
\end{definition}

\begin{proposition}[\cite{Gru39, Spr35}]~
\label{prop:grundy}
\begin{enumerate}[label={\rm(\roman*)}]
\item For any $G \in \short$, the following equivalence holds: $o(G) = \PP$ if and only if $\grundy(G) = 0$.
\item For any $G, H \in \short$, we have $\grundy(G+H) = \grundy(G) \oplus \grundy(H)$,
where $\oplus$ denotes the binary addition without carrying, called \emph{nim-sum} or \emph{bitwise XOR}.
\end{enumerate}
\end{proposition}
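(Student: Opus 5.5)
Both parts will be proved by induction on birthdays, working directly from Definitions~\ref{def:imp-outcome}, \ref{def:imp-sum} and \ref{def:grundy}.

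For (i), I induct on $\birth(G)$. The base case is $G \cong \es$: it has no options, so $o(\es) = \PP$, and $\grundy(\es) = \mex \es = 0$. For the inductive step, every option $G' \in G$ has smaller birthday, so the hypothesis gives $o(G') = \PP \iff \grundy(G') = 0$. By the definition of mex, $\grundy(G) = 0$ holds exactly when $0 \notin \{\grundy(G') \mid G' \in G\}$. By the hypothesis this means no option has outcome $\PP$, i.e.\ every option has outcome $\NN$. By Definition~\ref{def:imp-outcome} that is exactly $o(G) = \PP$.

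For (ii), I induct on $\birth(G) + \birth(H)$. Put $a = \grundy(G)$, $b = \grundy(H)$ and $c = a \oplus b$. By Definition~\ref{def:imp-sum}, the options of $G+H$ are $G'+H$ for $G' \in G$ and $G+H'$ for $H' \in H$. Each such pair has a smaller birthday sum, so the hypothesis gives option values $\grundy(G') \oplus b$ and $a \oplus \grundy(H')$. It suffices to show two things: $c$ is not among these values, and every $d < c$ is among them.

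The first claim is easy. Since $a = \mex\{\grundy(G') \mid G' \in G\}$, we have $\grundy(G') \neq a$, hence $\grundy(G') \oplus b \neq a \oplus b = c$. The argument for options $G+H'$ is symmetric.

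The second claim is the main obstacle and needs the bitwise argument. Take $d < c$ and let $k$ be the highest bit where $d$ and $c$ differ; then $c$ has a $1$ and $d$ a $0$ at bit $k$. Since $c_k = a_k \oplus b_k = 1$, exactly one of $a, b$ has bit $k$ set; say $a_k = 1$ and $b_k = 0$, the other case being symmetric. Set $a' = d \oplus b$. Above bit $k$, $d$ agrees with $c$, so $a'$ agrees with $c \oplus b = a$. At bit $k$, $a'_k = d_k \oplus b_k = 0 < 1 = a_k$. Hence $a' < a$. Because $a$ is the mex of the Grundy numbers of the options of $G$, some $G' \in G$ has $\grundy(G') = a'$. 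Then $\grundy(G'+H) = a' \oplus b = d$, as required. This gives $\grundy(G+H) = \mex(\ldots) = c$, completing the induction.
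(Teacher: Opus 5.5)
Your proof is correct. Part (i) is the usual birthday induction showing that $\mex$ of the option values is $0$ exactly when no option is a $\PP$-position. Part (ii) is the standard Sprague--Grundy argument: XOR with $b$ is injective, so $a \oplus b$ is not an option value, and the highest-differing-bit construction produces an option of value $d$ for every $d < a \oplus b$.

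The paper does not prove this proposition itself; it omits the proof and refers to Grundy, Sprague and the standard textbooks, where essentially this same argument appears.
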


\subsection{Ordinal Sums}

The ordinal sum $G \osum H$ of games $G$ and $H$ is the game made by combining $G$ and $H$ in the following way:
the games $G$ and $H$ are played in parallel;
a player makes a move on exactly one of $G$ and $H$ in a turn;
once the left game $G$ is played, the right game $H$ is discarded and can no longer be played.
The formal definition is as follows.

\begin{definition}
\label{def:osum}
For $G, H \in \short$,
the \emph{ordinal sum} $G \osum H$ of $G$ and $H$ is defined recursively as
\begin{align}
G \osum H \defeq{\cong} \{G' \mid G' \in G\} \cup \{ (G \osum H') \mid H' \in H\}. \label{eq:lp5v7yq4pmml}
\end{align}
\end{definition}

The pair $(\short, \osum)$ forms a (non-commutative) monoid with identity $\es$ as follows.  
\begin{proposition}~
 \label{prop:osum}
\begin{enumerate}[label={\rm(\roman*)}]
\item For any $G, H, J \in \short$, we have $(G \osum H) \osum J \cong G \osum (H \osum J)$.
\item For any $G \in \short$, we have $(G \osum \emptyset) \cong (\emptyset \osum G) \cong G$.
\end{enumerate}
\end{proposition}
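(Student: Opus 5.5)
We prove both parts by structural induction, comparing option sets directly via Definition \ref{def:osum}. Since games are identified with their sets of options, showing $A \cong B$ amounts to exhibiting equality of the two sets $A$ and $B$. For that, it suffices that every option of one side is, possibly after an inductive identification, literally an option of the other side.

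For (i), we induct on $\birth(J)$; the induction is harmless on $G$ and $H$ too, but only $J$ will shrink. First unfold the left-hand side. By Definition \ref{def:osum}, the options of $(G \osum H) \osum J$ are the elements of $G \osum H$ together with the games $(G \osum H) \osum J'$ for $J' \in J$. Unfolding $G \osum H$ once more, these options are
\begin{align}
G' \ (G' \in G), \qquad G \osum H' \ (H' \in H), \qquad (G \osum H) \osum J' \ (J' \in J).
\end{align}
Next unfold the right-hand side. The options of $G \osum (H \osum J)$ are $G'$ for $G' \in G$, together with $G \osum X$ for $X \in H \osum J$. The elements $X$ of $H \osum J$ are the games $H'$ ($H' \in H$) and $H \osum J'$ ($J' \in J$). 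So the options are
\begin{align}
G' \ (G' \in G), \qquad G \osum H' \ (H' \in H), \qquad G \osum (H \osum J') \ (J' \in J).
\end{align}
The first two families coincide verbatim. For the third, $\birth(J') < \birth(J)$, so the induction hypothesis gives $(G \osum H) \osum J' \cong G \osum (H \osum J')$. The two option sets are therefore equal, proving (i).

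For (ii), the claim $G \osum \es \cong G$ is immediate: the second family in \eqref{eq:lp5v7yq4pmml} is empty, so $G \osum \es \cong \{G' \mid G' \in G\} \cong G$. For $\es \osum G \cong G$, we induct on $\birth(G)$. Here the first family is empty, so $\es \osum G \cong \{\es \osum G' \mid G' \in G\}$. By induction $\es \osum G' \cong G'$, so this set is exactly $G$.

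The only point requiring care, and the one I would check most carefully, is the first step of (i). There one must notice that Definition \ref{def:osum} makes the options of $G \osum H$ themselves appear as options of $(G \osum H) \osum J$, rather than some modified form of them. This is what produces the family $G \osum H'$ on the left, matching the family $G \osum H'$ that arises on the right from the options $H'$ of $H \osum J$. One must also confirm that the induction measure is well-founded: it is $\birth(J)$ in (i) and $\birth(G)$ in (ii), and both decrease strictly when passing to an option.
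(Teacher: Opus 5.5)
Your proof is correct. Both option-set unfoldings are accurate, the families $G'$ and $G \osum H'$ match verbatim, and induction on $\birth(J)$ in (i), respectively $\birth(G)$ in (ii), identifies the remaining family.

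The paper gives no proof of Proposition \ref{prop:osum}; it defers all Section 2 results to the textbooks. Your argument is the same unfold-and-induct technique the paper itself uses for Proposition \ref{prop:iosum}.

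The statement can also be read off from the paper's later results, and those proofs do not use Proposition \ref{prop:osum}. Proposition \ref{prop:iosum-osum} (i) gives $G \osum H \cong \add{G}{H}{\es}$. Applying Proposition \ref{prop:iosum} (iii) with $\wh{H} \cong \wh{J} \cong \es$, together with $\add{\es}{\es}{\es} \cong \es$, then yields (i). Part (ii) is exactly Proposition \ref{prop:iosum} (i) and (ii) with the substitute taken to be $\es$.

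Your direct proof is more elementary and self-contained. The derived route shows that the ordinal-sum monoid laws are the $\wh{H} \cong \es$ specialization of the laws for ordinal sums with substitution.
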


The outcome of an ordinal sum is determined by the outcomes of the components as follows.
\begin{proposition}
\label{prop:osum-outcome}
For any $G, H \in \short$, we have
\begin{align}
o(G \osum H) =
\begin{cases}
\PP  &\,\,\text{if}\,\, o(G) = o(H) = \PP,\\
\NN  &\,\,\text{otherwise}.
\end{cases}
\end{align}
\end{proposition}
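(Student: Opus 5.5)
We prove the statement by induction on $\birth(G) + \birth(H)$ (equivalently, on the total number of moves remaining), working directly from Definition \ref{def:imp-outcome} and Definition \ref{def:osum}. The base case $G \cong \es$ gives $\es \osum H \cong H$ by Proposition \ref{prop:osum} (ii), so $o(\es \osum H) = o(H)$. Since $o(\es) = \PP$, this agrees with the claimed formula: the value is $\PP$ exactly when $o(H) = \PP$.

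For the inductive step, recall that the options of $G \osum H$ are the games $G'$ with $G' \in G$, together with the games $G \osum H'$ with $H' \in H$.

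First suppose $o(G) = o(H) = \PP$. Every $G' \in G$ then satisfies $o(G') = \NN$. Every $H' \in H$ satisfies $o(H') = \NN$, so by the induction hypothesis $o(G \osum H') = \NN$. Hence all options of $G \osum H$ are $\NN$, and $o(G \osum H) = \PP$.

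Now suppose $o(G) = \NN$. Then some $G' \in G$ has $o(G') = \PP$, and $G'$ is itself an option of $G \osum H$, so $o(G \osum H) = \NN$.

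The remaining case is $o(G) = \PP$ and $o(H) = \NN$. Then some $H' \in H$ has $o(H') = \PP$, and the induction hypothesis gives $o(G \osum H') = \PP$. This is an option of $G \osum H$, so again $o(G \osum H) = \NN$.

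The only point needing care is that the induction is well founded. Each option either drops the right component entirely (leaving $G'$, which is strictly simpler) or replaces $H$ by $H'$ with $\birth(H') < \birth(H)$, so the birthday sum strictly decreases in every case. The case analysis is otherwise routine.
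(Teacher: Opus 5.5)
Your proof is correct. The three cases are exhaustive, and the induction hypothesis is only ever applied to pairs $(G, H')$ with $H' \in H$, where $\birth(H') < \birth(H)$. Three consequences follow. Induction on $\birth(H)$ with $G$ fixed already suffices. Your separate base case is redundant, since the inductive step handles $G \cong \es$ on its own. The remark that $G'$ is ``strictly simpler'' is not needed, because you only read off $o(G')$ from $o(G)$.

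The paper does not prove this proposition; it defers to the textbooks. The route its framework points to differs from yours. Combine $\vset(G \osum H) = \vset(G) \osum \vset(H)$ (Proposition~\ref{prop:vset-sum}~(ii)) with the criterion that $o(K) = \PP$ if and only if $0 \notin \vset(K)$ (Proposition~\ref{prop:vset}~(iii)). Then note from Definition~\ref{def:osum-sets} that $0 \in A \osum B$ if and only if $0 \in A$ or $0 \in B$, because $x_0 = 0$ whenever $0 \notin A$. This is exactly the pattern of the paper's proof of Corollary~\ref{cor:iosum-outcome}. In fact the proposition is the special case $\wh{H} \cong \es$ of that corollary, via Proposition~\ref{prop:iosum-osum}~(i) and $o(\es) = \PP$.

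Your direct induction is more elementary and self-contained: it uses only the definitions of $o$ and $\osum$, and it tracks outcomes rather than Grundy values. The variation-set route is a two-line deduction, but only once the harder, inductively proved formula for $\vset(G \osum H)$ is available. Its payoff is generality. It extends verbatim to ordinal sums with substitution. It also handles mixtures with disjunctive sums, for which an outcome-only induction like yours cannot work.
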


\subsection{Disjunctive Sums and Ordinal Sums}
The outcome of a disjunctive sum (resp.~ordinal sum) is determined by the Grundy numbers (resp.~outcomes) of the components by Proposition \ref{prop:grundy} (i) (resp.~Proposition \ref{prop:osum-outcome}).
Then, how should we determine the outcome of a mixture of disjunctive sums and ordinal sums,
such as $(G_1 \osum G_2) + ((G_3 + G_4) \osum G_5)$.
To see the outcome of it, we need the Grundy numbers of ordinal sums.
However, $\grundy(G \osum H)$ is not determined by $\grundy(G)$ and $\grundy(H)$ in general.
Indeed, putting $G \defeq\cong \es, G' \defeq\cong \{\st 1\}$, and $H \defeq\cong \st 2$,
we have $\grundy(G \osum H) = 2 \neq 3 = \grundy(G' \osum H)$ despite of $\grundy(G) = 0 = \grundy(G')$.
Therefore, we need to consider something that can carry more information about a game than the Grundy numbers.
This motivates us to introduce the variation set $\vset(G)$ as follows.

\begin{definition}
\label{def:vset}
For $G \in \short$, the \emph{variation set} $\vset(G)$ of $G$ is defined as
\begin{align}
\vset(G) \defeq= \{\grundy(G') \mid G' \in G\}. \label{eq:6hkn74xb2cdj}
\end{align}
\end{definition}
Namely, the variation set is the set of the Grundy numbers of all options.

The following basic properties are seen directly from Definition \ref{def:vset}.
\begin{proposition}
\label{prop:vset}
For any $G \in \short$, the following statements \rm{(i)}--\rm{(iii)} hold.
\begin{enumerate}[label={\rm(\roman*)}]
\item $\grundy(G) = \mex \vset(G)$.
\item $\vset(G) = \{\mex \vset(G') \mid G' \in G\}$.
\item $o(G) = \PP$ if and only if $\vset(G) \not\owns 0$.
\end{enumerate}
\end{proposition}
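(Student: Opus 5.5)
The three statements all follow by unfolding Definitions \ref{def:grundy} and \ref{def:vset}, together with Proposition \ref{prop:grundy} (i) for the last one. No induction is needed beyond what is already built into the recursive definitions of $\grundy$ and $o$. I will prove (i) first and derive (ii) and (iii) from it.

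For (i), I substitute Definition \ref{def:vset} into Definition \ref{def:grundy}:
\begin{align}
\grundy(G) = \mex\{\grundy(G') \mid G' \in G\} = \mex \vset(G).
\end{align}
The only point to check is that $\mex$ is well defined, i.e.\ $\vset(G) \subsetneq \uint$. This holds because $G$ is a finite set: $G \in \short_n$ for some $n$, and each $\short_n$ is finite. Hence $\vset(G)$ is finite and cannot be all of $\uint$.

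For (ii), I apply (i) to each option $G' \in G$, which gives $\grundy(G') = \mex \vset(G')$. Substituting this into Definition \ref{def:vset} yields
\begin{align}
\vset(G) = \{\grundy(G') \mid G' \in G\} = \{\mex\vset(G') \mid G' \in G\}.
\end{align}

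For (iii), I combine Proposition \ref{prop:grundy} (i) with (i): $o(G) = \PP$ holds if and only if $\mex\vset(G) = 0$. By the definition $\mex A = \min(\uint \setminus A)$, we have $\mex A = 0$ exactly when $0 \notin A$. Hence $o(G) = \PP$ if and only if $0 \notin \vset(G)$.

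As a sanity check independent of Sprague--Grundy, (iii) can also be read off Definition \ref{def:imp-outcome} directly. By that definition, $o(G) = \NN$ if and only if some option $G'$ satisfies $o(G') = \PP$. Applying the previous step to $G'$ shows that $o(G') = \PP$ is equivalent to $\grundy(G') = 0$, so $o(G) = \NN$ if and only if $0 \in \vset(G)$. This route needs only the one-directional use of Proposition \ref{prop:grundy} (i) on options, or an induction on birthday if one wants to avoid it entirely.

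There is no real obstacle here. The only subtlety worth stating explicitly is the finiteness of $\vset(G)$, which makes $\mex$ meaningful.
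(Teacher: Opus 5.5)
Your proof is correct and is exactly the direct unfolding of Definitions \ref{def:grundy} and \ref{def:vset} (plus Proposition \ref{prop:grundy} (i) for (iii)) that the paper means when it says these properties are seen directly from the definitions, omitting the details. One small quibble: your side remark calls the use of Proposition \ref{prop:grundy} (i) on options ``one-directional,'' but both directions of $o(G') = \PP \iff \grundy(G') = 0$ are needed there; this does not affect your main argument.
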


By Proposition \ref{prop:vset} (i), 
the Grundy number is recovered from the variation set,
and thus the variation set has more information than the Grundy number.
Also, Proposition \ref{prop:vset} (ii) indicates that
the variation set is obtained from the variation sets of all options.

As seen below in Proposition \ref{prop:vset-sum},
the variation sets of disjunctive sums and ordinal sums are determined by the variation sets of the components.
To state the proposition, we introduce the following notation.

\begin{definition}
\label{def:osum-sets}
For finite sets $A, B \subseteq \uint$,
define $A \osum B \defeq= A \cup \{x_i \mid i \in B\}$,
where $\uint \setminus A = \{x_0, x_1, x_2, \ldots \}$ and $x_0 < x_1 < x_2 < \cdots$.
\end{definition}

Because $\mex (A \osum B) = x_{\mex B}$ by Definition \ref{def:osum-sets},
we see the following observation.

\begin{lemma}
\label{lem:set-osum}
For any finite sets $A, B_1, B_2 \subseteq \uint$,
if $\mex B_1 = \mex B_2$, then $\mex (A \osum B_1) = \mex (A \osum B_2)$.
\end{lemma}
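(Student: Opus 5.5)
The plan is to compute $\mex(A \osum B)$ explicitly from Definition~\ref{def:osum-sets} and then observe that it depends on $B$ only through $\mex B$. Write $\uint \setminus A = \{x_0, x_1, x_2, \ldots\}$ with $x_0 < x_1 < \cdots$. Because $A$ is finite, this complement is infinite, so $x_i$ is defined for every $i \in \uint$. The first step is to establish the identity
\begin{align}
\mex (A \osum B) = x_{\mex B}
\end{align}
for every finite $B \subseteq \uint$. Once this is in hand, the lemma is immediate: if $\mex B_1 = \mex B_2 = m$, then both sides equal $x_m$.

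To prove the identity, set $m \defeq= \mex B$. We then check two things.

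\textbf{$x_m$ is absent from $A \osum B$.} It is not in $A$, because every $x_i$ lies in the complement of $A$. Nor is it of the form $x_i$ for some $i \in B$: the map $i \mapsto x_i$ is strictly increasing, hence injective, so $x_i = x_m$ would force $i = m \notin B$.

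\textbf{Every $y < x_m$ belongs to $A \osum B$.} Either $y \in A$, or $y \in \uint \setminus A$. In the second case $y = x_i$ for some $i$, and $x_i < x_m$ gives $i < m$. Since $m = \mex B$, every $i < m$ lies in $B$, so $y = x_i$ belongs to $\{x_i \mid i \in B\} \subseteq A \osum B$.

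Together these two facts give $\mex(A \osum B) = x_m$, which proves the identity and hence the lemma.

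There is no real obstacle here. The only points needing care are that $\uint \setminus A$ is infinite, which requires finiteness of $A$ so that $x_m$ exists, and the monotonicity of the enumeration $i \mapsto x_i$, which is used in both directions of the check.
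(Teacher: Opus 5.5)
Your proposal is correct and follows the paper's route: the paper justifies the lemma by the one-line observation that $\mex(A \osum B) = x_{\mex B}$, which it reads off from Definition~\ref{def:osum-sets}. You supply a valid detailed verification of that identity by checking that $x_m \notin A \osum B$ and that every $y < x_m$ lies in $A \osum B$, where $m = \mex B$.
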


For $A \subseteq \uint$ and $x \in \uint$, we define the set $(A \oplus x)$ as $A \oplus x \defeq= \{a \oplus x \mid a \in A\}$.
Then the following results hold.

\begin{proposition}
\label{prop:vset-sum}
For any $G, H \in \short$, the following statements \rm{(i)} and \rm{(ii)} hold.
\begin{enumerate}[label={\rm(\roman*)}]
\item $\vset(G + H) = (\vset(G) \oplus \mex \vset(H)) \cup (\vset(H) \oplus \mex \vset(G))$.
\item $\vset(G \osum H) = \vset(G) \osum \vset(H)$.
\end{enumerate}
\end{proposition}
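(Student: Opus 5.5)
Both statements follow by induction on $\birth(G)+\birth(H)$, and in each case the key input is the recursive definition of $+$ or $\osum$ together with Proposition \ref{prop:grundy} (ii) and Proposition \ref{prop:vset} (i), (ii).

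For (i), by Definition \ref{def:imp-sum} the options of $G+H$ are $G'+H$ with $G' \in G$ and $G+H'$ with $H' \in H$. By Proposition \ref{prop:grundy} (ii) and Proposition \ref{prop:vset} (i), each such option has
\begin{align}
\grundy(G'+H) = \grundy(G') \oplus \mex\vset(H), \qquad \grundy(G+H') = \grundy(H') \oplus \mex\vset(G).
\end{align}
Collecting over $G' \in G$ gives $\vset(G) \oplus \mex\vset(H)$, and collecting over $H' \in H$ gives $\vset(H) \oplus \mex\vset(G)$. Their union is $\vset(G+H)$. No induction is needed beyond what Proposition \ref{prop:grundy} already encodes.

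For (ii), I would first establish the auxiliary claim that $\grundy(G \osum H) = x_{\grundy(H)}$, where $x_0<x_1<\cdots$ enumerates $\uint \setminus \vset(G)$, i.e., that $\mex(\vset(G) \osum \vset(H)) = x_{\mex\vset(H)}$. By Definition \ref{def:osum}, the options of $G \osum H$ are the $G' \in G$, whose Grundy values form exactly $\vset(G)$, together with the games $G \osum H'$ for $H' \in H$. By induction on $\birth(H)$ with $G$ fixed, $\grundy(G \osum H') = x_{\grundy(H')}$. Therefore
\begin{align}
\vset(G \osum H) = \vset(G) \cup \{x_{\grundy(H')} \mid H' \in H\} = \vset(G) \cup \{x_i \mid i \in \vset(H)\} = \vset(G) \osum \vset(H).
\end{align}
Taking $\mex$ closes the induction: the values $x_j$ with $j < \mex\vset(H)$ all appear, $\vset(G)$ is present, and $x_{\mex\vset(H)}$ is missing. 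Every integer below $x_{\mex\vset(H)}$ is either in $\vset(G)$ or equals some $x_j$ with $j<\mex\vset(H)$, so the $\mex$ is exactly $x_{\mex\vset(H)} = x_{\grundy(H)}$, as the remark before Lemma \ref{lem:set-osum} indicates.

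The main care point is this interleaving of induction and identity in (ii): the claim about $\grundy(G\osum H')$ must be proved simultaneously with the variation-set identity, inducting only on $H$ while $G$ stays fixed. This works because the $G'$-options leave the ordinal sum entirely and contribute $\vset(G)$ directly, with no recursion on $G$.
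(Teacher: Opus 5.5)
Your proof is correct. For (i) you follow the paper's argument line for line: list the options $G'+H$ and $G+H'$, apply Proposition~\ref{prop:grundy}~(ii), and rewrite $\grundy(\cdot)$ as $\mex\vset(\cdot)$ via Proposition~\ref{prop:vset}~(i).

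For (ii) the paper gives no argument of its own and simply cites \cite[pp.~219--220]{BCG18}. You instead supply a self-contained induction on $\birth(H)$ with $G$ fixed. The options $G'\in G$ contribute $\vset(G)$ verbatim. The observation $\mex(A\osum B)=x_{\mex B}$ then turns the inductive identity $\vset(G\osum H')=\vset(G)\osum\vset(H')$ into $\grundy(G\osum H')=x_{\grundy(H')}$, which is exactly what is needed to reassemble $\vset(G)\osum\vset(H)$. This removes the dependence on the external reference. It also has the same shape as the paper's own Case~2 in the proof of Theorem~\ref{thm:iosum-vset}: inductive hypothesis on options, then a $\mex$ lemma.

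Two details deserve a sentence each when you write it up. First, $\vset(G)$ is finite, so $\uint\setminus\vset(G)$ is infinite and every $x_i$ exists. Second, $i\mapsto x_i$ is strictly increasing, hence injective, which is what guarantees $x_{\mex\vset(H)}\notin\{x_i \mid i\in\vset(H)\}$.

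Finally, your opening sentence announces induction on $\birth(G)+\birth(H)$ for both parts. In fact (i) needs no induction once Proposition~\ref{prop:grundy} is granted, and (ii) only uses induction on $\birth(H)$.
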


Proposition \ref{prop:vset-sum} (i) is seen as
\begin{align}
\vset(G + H)
&\eqlab{A}= \{\grundy(G' + H) \mid G' \in G\} \cup \{\grundy(G + H') \mid H' \in H\}\\
&\eqlab{B}= \{\grundy(G') \oplus \grundy(H) \mid G' \in G\} \cup \{\grundy(G) \oplus \grundy(H') \mid H' \in H\} \\
&= (\{\grundy(G') \mid G' \in G\} \oplus \grundy(H)) \cup (\{\grundy(H') \mid H' \in H\} \oplus \grundy(G))\\
&\eqlab{C}= (\vset(G) \oplus \mex \vset(H)) \cup (\vset(H) \oplus \mex \vset(G)),
\end{align}
where
(A) follows from Equations \eqref{eq:imp-sum} and \eqref{eq:6hkn74xb2cdj},
(B) follows from Proposition \ref{prop:grundy} (ii),
and (C) follows from Equation \eqref{eq:6hkn74xb2cdj} and Proposition \ref{prop:vset} (i).
For Proposition \ref{prop:vset-sum} (ii), refer to \cite[pp.~219--220]{BCG18}.

Lemma \ref{lem:set-osum} and Proposition \ref{prop:vset-sum} (ii) yield the following result, which is a particular case of the Colon principle\cite[p.~219]{BCG18}.
\begin{proposition}
\label{prop:colon-pri}
For any $G, H_1, H_2 \in \short$, if $\grundy(H_1) = \grundy(H_2)$, then $\grundy(G\osum H_1) = \grundy(G \osum H_2)$.
\end{proposition}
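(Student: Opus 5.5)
The statement is Proposition \ref{prop:colon-pri}: if $\grundy(H_1) = \grundy(H_2)$, then $\grundy(G \osum H_1) = \grundy(G \osum H_2)$. My plan is to derive it by chaining three results already stated: Proposition \ref{prop:vset} (i), Proposition \ref{prop:vset-sum} (ii) and Lemma \ref{lem:set-osum}. No new induction on games should be needed, since the recursive content is already packed into Proposition \ref{prop:vset-sum} (ii).

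Fix $G, H_1, H_2 \in \short$ with $\grundy(H_1) = \grundy(H_2)$, and set $A \defeq= \vset(G)$, $B_1 \defeq= \vset(H_1)$, $B_2 \defeq= \vset(H_2)$.

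\begin{enumerate}
\item I first check that these are finite subsets of $\uint$, as Lemma \ref{lem:set-osum} requires. Every game has finitely many options, because $\short_n$ is finite for each $n$ by induction on $n$. Hence each variation set is a finite subset of $\uint$.
\item By Proposition \ref{prop:vset} (i), the hypothesis becomes $\mex B_1 = \grundy(H_1) = \grundy(H_2) = \mex B_2$.
\item Applying Proposition \ref{prop:vset} (i) again, and then Proposition \ref{prop:vset-sum} (ii), gives
\begin{align}
\grundy(G \osum H_i) = \mex \vset(G \osum H_i) = \mex (A \osum B_i)
\end{align}
for $i = 1, 2$.
\item Lemma \ref{lem:set-osum}, applied with the common value $\mex B_1 = \mex B_2$, yields $\mex(A \osum B_1) = \mex(A \osum B_2)$. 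This concludes the proof.
\end{enumerate}

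The only points that might merit a sentence of justification are the finiteness in step 1 and the lemma itself. The lemma rests on the remark that $\mex(A \osum B) = x_{\mex B}$. To see this, note that $A \osum B$ contains all of $A$ together with $x_j$ for $j \in B$. The smallest element of $\uint$ not in $A \osum B$ is therefore the smallest $x_j$ with $j \notin B$, which is $x_{\mex B}$. That depends on $B$ only through $\mex B$. Since the paper already states the lemma, I would simply cite it, so I expect no genuine obstacle here.
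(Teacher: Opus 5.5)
Your proposal is correct and follows the paper's route: the paper obtains this proposition directly from Lemma \ref{lem:set-osum} combined with Proposition \ref{prop:vset-sum} (ii), using Proposition \ref{prop:vset} (i) to pass between Grundy numbers and $\mex$ of variation sets, exactly as you do. Your remarks on why variation sets are finite and why $\mex(A \osum B) = x_{\mex B}$ are accurate, and they fill in details the paper leaves implicit.
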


\section{Main Results}
\label{sec:main}

In an ordinal sum $G \osum H$, once the left game $G$ is played, the right game $H$ is discarded and can no longer be played.
This can also be interpreted as $H$ being replaced by the empty game $\es$ once $G$ is played.
From this perspective, we propose a generalization of ordinal sums, called \emph{ordinal sums with substitution}, in which $H$ is replaced with a predefined $\wh{H}$ each time $G$ is played.

In Section \ref{subsec:iosum}, we introduce the formal definition of ordinal sums with substitution
and study their basic properties.
In Section \ref{subsec:iosum-vset}, we prove a simple formula for the variation set of an ordinal sum with substitution as
one of the main results of this paper, and see several direct consequences from the theorem.
In Section \ref{subsec:iosum-exam}, we provide examples of applying the above results to several particular cases.

\subsection{Ordinal Sums with Substitution}
\label{subsec:iosum}

The ordinal sum with substitution $\add{G}{H}{\wh{H}}$ is the game made by combining $G$, $H$, and $\wh{H}$ in the following way:
the games $G$ and $H$ are played in parallel;
a player makes a move on exactly one of $G$ and $H$ in a turn;
each time the left game $G$ is played, the right game $H$ is replaced with $\wh{H}$.
The formal definition is as follows.

\begin{definition}
\label{def:iosum}
For $G, H, \wh{H} \in \short$, the game $\add{G}{H}{\wh{H}}$ is defined recursively as
\begin{align}
\add{G}{H}{\wh{H}} \defeq\cong \{ (\add{G'}{\wh{H}}{\wh{H}}) \mid G' \in G\} \cup \{ (\add{G}{H'}{\wh{H}}) \mid H' \in H \}.
\label{eq:iosum}
\end{align}
\end{definition}

The game $\add{G}{H}{\wh{H}}$ is indeed a short game as follows.

\begin{proposition}
\label{prop:iosum-short}
For any $G, H, \wh{H} \in \short$, we have $(\add{G}{H}{\wh{H}}) \in \short_{\beta(G, H, \wh{H})}$,
where $\beta(G, H, \wh{H}) \defeq{=} \birth(G)(\birth(\wh{H})+1) + \birth(H)$.
\end{proposition}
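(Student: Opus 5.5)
We prove the statement by strong induction, but the inductive step needs a little care. The left-move option $\add{G'}{\wh{H}}{\wh{H}}$ has right component $\wh{H}$, which may have a larger birthday than $H$. So a naive induction on $\birth(G)+\birth(H)$ fails.

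We instead induct lexicographically on the pair $(\birth(G), \birth(H))$, with $\wh{H}$ fixed throughout. Both kinds of options decrease this pair:
\begin{itemize}
\item a right option $\add{G}{H'}{\wh{H}}$ with $H' \in H$ keeps $\birth(G)$ and has $\birth(H') < \birth(H)$;
\item a left option $\add{G'}{\wh{H}}{\wh{H}}$ with $G' \in G$ has $\birth(G') < \birth(G)$, so it is smaller whatever $\birth(\wh{H})$ is.
\end{itemize}

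The claim carried through the induction is that for all $G, H$ (and the fixed $\wh{H}$),
\begin{align}
\add{G}{H}{\wh{H}} \in \short_{\beta(G,H,\wh{H})}, \qquad \beta(G, H, \wh{H}) = \birth(G)(\birth(\wh{H})+1) + \birth(H).
\end{align}
By Definition~\ref{def:impartial}, a game lies in $\short_n$ with $n \geq 1$ as soon as every option lies in $\short_{n-1}$. Since $\short_0 \subsetneq \short_1 \subsetneq \cdots$, it suffices to show that each option lies in $\short_m$ for some $m \le \beta(G,H,\wh{H}) - 1$.

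\textbf{Base case.} If $\birth(G) = \birth(H) = 0$, then $G \cong H \cong \es$. There are no options, so the game is $\es \in \short_0$, and $\beta = 0$.

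\textbf{Right options.} For $H' \in H$ we have $\birth(H') \le \birth(H) - 1$. By the induction hypothesis the option lies in $\short_{\beta(G,H',\wh{H})}$, and
\begin{align}
\beta(G,H',\wh{H}) = \birth(G)(\birth(\wh{H})+1) + \birth(H') \le \beta(G,H,\wh{H}) - 1.
\end{align}

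\textbf{Left options.} For $G' \in G$ we have $\birth(G') \le \birth(G) - 1$. By the induction hypothesis the option lies in $\short_{\beta(G',\wh{H},\wh{H})}$, and
\begin{align}
\beta(G',\wh{H},\wh{H}) = \birth(G')(\birth(\wh{H})+1) + \birth(\wh{H}) \le (\birth(G)-1)(\birth(\wh{H})+1) + \birth(\wh{H}) = \birth(G)(\birth(\wh{H})+1) - 1.
\end{align}
This is at most $\beta(G,H,\wh{H}) - 1$ because $\birth(H) \ge 0$.

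Hence every option lies in $\short_{\beta - 1}$, which gives membership in $\short_{\beta}$ whenever $\beta \ge 1$. The case $\beta = 0$ forces $\birth(G) = \birth(H) = 0$, which is the base case.

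The main obstacle is the left-option estimate. The substituted game $\wh{H}$ can be arbitrarily large, so the weight $\birth(\wh{H})+1$ on $\birth(G)$ is exactly what absorbs the contribution of $\wh{H}$ in that option.

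As a technical point, the recursion in Definition~\ref{def:iosum} is itself well-founded by the same lexicographic order, so the definition makes sense. Finiteness of the resulting game also follows from this induction.
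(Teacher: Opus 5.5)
Your proof is correct and is essentially the paper's argument: you show that every option of $\add{G}{H}{\wh{H}}$ lies in $\short_{\beta(G,H,\wh{H})-1}$ and then use $\short_{n} = 2^{\short_{n-1}}$. The only difference is cosmetic: you induct lexicographically on $(\birth(G),\birth(H))$ with $\wh{H}$ fixed, whereas the paper inducts on $\beta$ itself. Your left-option estimate uses $\birth(G') \le \birth(G)-1$ to get $\beta(G',\wh{H},\wh{H}) \le \birth(G)(\birth(\wh{H})+1)-1 < \beta(G,H,\wh{H})$, and this is the correct form of the paper's chain \eqref{eq:44cf5enayvhl}, whose last step wrongly equates $\birth(G)(\birth(\wh{H})+1)+\birth(\wh{H})$ with $\beta(G,H,\wh{H})$.
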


\begin{proof}[Proof of Proposition \ref{prop:iosum-short}]
We prove this by induction on $\beta(G, H, \wh{H})$.
For the base case $\beta(G, H, \wh{H}) = 0$, we have $\birth(G) = \birth(H) = 0$ and thus
\begin{align}
\add{G}{H}{\wh{H}}
\cong \add{\es}{\es}{\wh{H}}
\eqlab{A}\cong \{ (\add{G'}{\wh{H}}{\wh{H}}) \mid G' \in \es\} \cup \{ (\add{\es}{H'}{\wh{H}}) \mid H' \in \es \}
\cong \es
\in \short_0
\end{align}
as desired, where
(A) follows from Equation \eqref{eq:iosum}.
We consider the induction step for $\beta(G, H, \wh{H}) \geq 1$.
For any $G' \in G$, we have
\begin{align}
\beta(G', \wh{H}, \wh{H})
&= \birth(G')(\birth(\wh{H})+1) + \birth(\wh{H})\\
&< \birth(G)(\birth(\wh{H})+1) + \birth(\wh{H})\\
&= \beta(G, H, \wh{H}). \label{eq:44cf5enayvhl}
\end{align}
Hence, we can apply the induction hypothesis to obtain $\add{G'}{\wh{H}}{\wh{H}} \in \short_{\beta(G', \wh{H}, \wh{H})}$,
which leads to
\begin{align}
\{ (\add{G'}{\wh{H}}{\wh{H}}) \mid G' \in G\}
\subseteq \bigcup_{G' \in G} \short_{\beta(G', \wh{H}, \wh{H})}
\subseteq \short_{ \max\{\beta(G', \wh{H}, \wh{H}) \mid G' \in G\}}. \label{eq:zynslyldsn1u}
\end{align}
Also, for any $H' \in H$, we have
\begin{align}
\beta(G, H', \wh{H})
&= \birth(G)(\birth(\wh{H})+1) + \birth(H')\\
&< \birth(G)(\birth(\wh{H})+1) + \birth(H)\\
&= \beta(G, H, \wh{H}). \label{eq:7z7ggm8xf8hq}
\end{align}
Hence, we can apply the induction hypothesis to obtain $\add{G}{H'}{\wh{H}} \in \short_{\beta(G, H', \wh{H})}$,
which leads to
\begin{align}
\{ (\add{G}{H'}{\wh{H}}) \mid H' \in H\}
\subseteq \bigcup_{H' \in H} \short_{\beta(G, H', \widehat{H})}
\subseteq \short_{ \max\{\beta(G, H', \widehat{H}) \mid H' \in H\}}. \label{eq:8pjr9k4yfl88}
\end{align}
Combining the results above, we obtain
\begin{align}
\add{G}{H}{\wh{H}}
&\eqlab{A}\cong \{ (\add{G'}{\wh{H}}{\wh{H}}) \mid G' \in G\} \cup \{ (\add{G}{H'}{\wh{H}}) \mid H' \in H\}\\
&\eqlab{B}\subseteq \short_{ \max\{\beta(G', \wh{H}, \wh{H}) \mid G' \in G\}} \cup \short_{ \max\{\beta(G, H', \widehat{H}) \mid H' \in H\}}\\
&\subseteq \short_{ \max (\{\beta(G', \wh{H}, \wh{H}) \mid G' \in G\} \cup \{\beta(G, H', \widehat{H}) \mid H' \in H\} )},
\end{align}
where
(A) follows from Equation \eqref{eq:iosum},
and (B) follows from Equations \eqref{eq:zynslyldsn1u} and \eqref{eq:8pjr9k4yfl88}.
Therefore, we have
\begin{align}
\add{G}{H}{\wh{H}}
&\in 2^{\short_{ \max (\{\beta(G', \wh{H}, \wh{H}) \mid G' \in G\} \cup \{\beta(G, H', \widehat{H}) \mid H' \in H\} )}}\\
&= \short_{ \max (\{\beta(G', \wh{H}, \wh{H}) \mid G' \in G\} \cup \{\beta(G, H', \widehat{H}) \mid H' \in H\} ) + 1}\\
&\eqlab{A}\subseteq \short_{\beta(G, H, \widehat{H})},
\end{align}
where
(A) follows from Equations \eqref{eq:44cf5enayvhl} and \eqref{eq:7z7ggm8xf8hq}.
\end{proof}

Ordinal sums with replacement have the following fundamental properties.

\begin{proposition}~
\label{prop:iosum}
\begin{enumerate}[label={\rm(\roman*)}]
\item For any $G \in \short$, we have $\add{G}{\es}{\es} \cong G$.
\item For any $H, \wh{H} \in \short$, we have $\add{\es}{H}{\wh{H}} \cong H$.
\item For any $G, H, \wh{H}, J, \wh{J} \in \short$, we have
$\add{(\add{G}{H}{\wh{H}})}{J}{\wh{J}} \cong \add{G}{(\add{H}{J}{\wh{J}})}{  (\add{\wh{H}}{\wh{J}}{\wh{J}})  }$.
\end{enumerate}
\end{proposition}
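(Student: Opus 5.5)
All three parts are identities between elements of $\short$, so I will prove each one by structural induction. I unfold Definition \ref{def:iosum} once on each side and compare the resulting option sets, applying the induction hypothesis to the options. The natural induction measure is a sum of birthdays, or equivalently the well-founded "is an option of" relation on the tuple of games involved. Proposition \ref{prop:iosum-short} guarantees that all objects are in $\short$, so the recursion is well-founded.

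For (i), I induct on $\birth(G)$. By Equation \eqref{eq:iosum},
\begin{align}
\add{G}{\es}{\es} \cong \{\add{G'}{\es}{\es} \mid G' \in G\} \cup \es,
\end{align}
because $\es$ has no options. By the induction hypothesis, each $\add{G'}{\es}{\es} \cong G'$, so the right-hand side is $\{G' \mid G' \in G\} \cong G$.

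For (ii), I induct on $\birth(H)$ with $\wh{H}$ fixed. Since $\es$ has no options, only the second set in Equation \eqref{eq:iosum} survives, giving $\add{\es}{H}{\wh{H}} \cong \{\add{\es}{H'}{\wh{H}} \mid H' \in H\}$. The induction hypothesis then turns this into $\{H' \mid H' \in H\} \cong H$. In particular $\wh{H}$ plays no role here.

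For (iii), which I expect to be the only real work, I write $L \cong \add{(\add{G}{H}{\wh{H}})}{J}{\wh{J}}$ and $R \cong \add{G}{(\add{H}{J}{\wh{J}})}{(\add{\wh{H}}{\wh{J}}{\wh{J}})}$. I will prove $L \cong R$ for all quintuples by induction on $\birth(G)+\birth(H)+\birth(J)$, with $\wh{H}, \wh{J}$ treated as arbitrary parameters. I expand both sides.

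The options of $L$ come in three types.
\begin{enumerate}[label={\rm(\alph*)}]
\item Moves in $G$ give $\add{(\add{G'}{\wh{H}}{\wh{H}})}{\wh{J}}{\wh{J}}$.
\item Moves in $H$ give $\add{(\add{G}{H'}{\wh{H}})}{\wh{J}}{\wh{J}}$.
\item Moves in $J$ give $\add{(\add{G}{H}{\wh{H}})}{J'}{\wh{J}}$.
\end{enumerate}
The options of $R$ come in matching types.
\begin{enumerate}[label={\rm(\alph*$'$)}]
\item Moves in $G$ give $\add{G'}{(\add{\wh{H}}{\wh{J}}{\wh{J}})}{(\add{\wh{H}}{\wh{J}}{\wh{J}})}$.
\item Moves in $H$ give $\add{G}{(\add{H'}{\wh{J}}{\wh{J}})}{(\add{\wh{H}}{\wh{J}}{\wh{J}})}$.
\item Moves in $J$ give $\add{G}{(\add{H}{J'}{\wh{J}})}{(\add{\wh{H}}{\wh{J}}{\wh{J}})}$.
\end{enumerate}
Here I unfold $\add{H}{J}{\wh{J}}$ via Equation \eqref{eq:iosum} to split the second-component moves of $R$ into (b$'$) and (c$'$).

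The key observation is that each of (a), (b), (c) is again of the form of the left side of (iii), with the quintuple
\begin{align}
(G', \wh{H}, \wh{H}, \wh{J}, \wh{J}), \quad (G, H', \wh{H}, \wh{J}, \wh{J}), \quad (G, H, \wh{H}, J', \wh{J})
\end{align}
respectively. Applying the induction hypothesis to these quintuples yields exactly (a$'$), (b$'$), (c$'$), so the two option sets coincide and $L \cong R$.

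The main obstacle is that the measure must strictly decrease in each case, even though in (a) and (b) the components $H$ and $J$ are replaced by the possibly larger $\wh{H}$ and $\wh{J}$. A plain birthday sum therefore fails. Instead I will use the lexicographic order on $(\birth(G), \birth(H), \birth(J))$, which is well-founded.
\begin{itemize}
\item In case (a), $\birth(G') < \birth(G)$.
\item In case (b), $G$ is unchanged and $\birth(H') < \birth(H)$.
\item In case (c), $G$ and $H$ are unchanged and $\birth(J') < \birth(J)$.
\end{itemize}
In each case the triple strictly decreases lexicographically, whatever $\wh{H}$ and $\wh{J}$ are, so the induction goes through.
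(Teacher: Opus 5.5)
Your proof is correct and follows the paper's argument: induction on $\birth(G)$ and $\birth(H)$ for (i) and (ii), and for (iii) unfolding both sides into the same three families of options and matching them through the induction hypothesis. The only difference is the measure in (iii). The paper inducts on $\birth(G)+\birth(H)+\birth(J)$, which, as you point out, need not decrease for moves in $G$ or $H$, because $H$ and $J$ are replaced by $\wh{H}$ and $\wh{J}$; your lexicographic order on $(\birth(G),\birth(H),\birth(J))$ is what makes that step rigorous.
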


\begin{proof}[Proof of Proposition \ref{prop:iosum}]
(Proof of (i))
We prove this by induction on $\birth(G)$.
We have
\begin{align}
\add{G}{\es}{\es}
&\eqlab{A}\cong \{ (\add{G'}{\es}{\es}) \mid G' \in G\} \cup \{ (\add{G}{H'}{\es}) \mid H' \in \es \}\\
&\cong \{ (\add{G'}{\es}{\es}) \mid G' \in G\}\\
&\eqlab{B}\cong \{ G' \mid G' \in G\}\\
&\cong G,
\end{align}
where
(A) follows from Equation \eqref{eq:iosum},
and (B) follows from the induction hypothesis.

(Proof of (ii))
We prove this by induction on $\birth(H)$.
We have
\begin{align}
\add{\es}{H}{\wh{H}}
&\eqlab{A}\cong \{ (\add{G'}{\wh{H}}{\wh{H}}) \mid G' \in \es\} \cup \{ (\add{\es}{H'}{\wh{H}}) \mid H' \in H \}\\
&\cong \{ (\add{\es}{H'}{\wh{H}}) \mid H' \in H \}\\
&\eqlab{B}\cong \{ H' \mid H' \in H \}\\
&\cong H,
\end{align}
where
(A) follows from Equation \eqref{eq:iosum},
and (B) follows from the induction hypothesis.

(Proof of (iii))
We prove this by induction on $\birth(G) + \birth(H) + \birth(J)$.
We have
\begin{align}
\lefteqn{\add{(\add{G}{H}{\wh{H}})}{J}{\wh{J}}}\\
&\eqlab{A}\cong \{ (\add{K}{\wh{J}}{\wh{J}}) \mid K \in (\add{G}{H}{\wh{H}})\} \cup \{ (\add{(\add{G}{H}{\wh{H}})}{J'}{\wh{J}}) \mid J' \in J \}\\
&\eqlab{B}\cong \left\{ (\add{K}{\wh{J}}{\wh{J}}) \mid K \in \left(\{ (\add{G'}{\wh{H}}{\wh{H}}) \mid G' \in G\} \cup \{ (\add{G}{H'}{\wh{H}}) \mid H' \in H \} \right)\right\}\\
&\qquad \cup \{ (\add{(\add{G}{H}{\wh{H}})}{J'}{\wh{J}}) \mid J' \in J \}\\
&\cong \{ (\add{(\add{G'}{\wh{H}}{\wh{H}})}{\wh{J}}{\wh{J}}) \mid G' \in G\}\\
&\qquad \cup \{ (\add{(\add{G}{H'}{\wh{H}})}{\wh{J}}{\wh{J}}) \mid H' \in H\}\\
&\qquad \cup \{ (\add{(\add{G}{H}{\wh{H}})}{J'}{\wh{J}}) \mid J' \in J \}, \label{eq:t8mx67yf3awi}
\end{align}
where
(A) follows from Equation \eqref{eq:iosum},
and (B) follows from Equation \eqref{eq:iosum}.
By the symmetric argument, we have
\begin{align}
\add{G}{(\add{H}{J}{\wh{J}})}{  (\add{\wh{H}}{\wh{J}}{\wh{J}})  }
&\cong \{ (\add{G'}{(\add{H}{J}{\wh{J}})}{  (\add{\wh{H}}{\wh{J}}{\wh{J}})  }) \mid G' \in G\}\\
&\qquad \cup \{ (\add{G}{(\add{H'}{J}{\wh{J}})}{  (\add{\wh{H}}{\wh{J}}{\wh{J}})  } \mid H' \in H\}\\
&\qquad \cup \{ (\add{G}{(\add{H}{J'}{\wh{J}})}{  (\add{\wh{H}}{\wh{J}}{\wh{J}})  } \mid J' \in J \}. \label{eq:731d7w6vxzzb}
\end{align}
By the induction hypothesis, the right-hand sides of Equations \eqref{eq:t8mx67yf3awi} and \eqref{eq:731d7w6vxzzb} are identical.
\end{proof}

As seen in Proposition \ref{prop:iosum} (iii),
the result of the operation depends on the placement of parentheses. 
Hereinafter, when parentheses are omitted, it is assumed that the operations are performed from left to right.
Namely, we write
\begin{align}
G_0 \osum_{\wh{G}_1} G_1 \osum_{\wh{G}_2} G_2 \osum_{\wh{G}_3} \cdots \osum_{\wh{G}_n} G_n \label{eq:2fvgec1obliy}
\end{align}
for
\begin{align}
(\ldots((G_0 \osum_{\wh{G}_1} G_1) \osum_{\wh{G}_2} G_2) \osum_{\wh{G}_3} \ldots ) \osum_{\wh{G}_n} G_n. \label{eq:rwujgdysnlmj}
\end{align}

Intuitively, Equation \eqref{eq:2fvgec1obliy} (equivalently, Equation \eqref{eq:rwujgdysnlmj}) is the compound game in which
a player plays one of $G_0, G_1, \ldots, G_n$ and if $G_i$ is played then $G_{i+1}, G_{i+2}, \ldots, G_n$ are replaced with
$\wh{G}_{i+1}, \wh{G}_{i+2}, \ldots, \wh{G}_n$, respectively.

Proposition \ref{prop:iosum} (iii) is naturally generalized as follows.

\begin{corollary}
\label{cor:iosum}
For any $n \in \pint$ and $G_0, G_1, G_2, \ldots, G_n, \wh{G}_1, \wh{G}_2, \ldots, \wh{G}_n \in \short$, we have
\begin{align}
&\lefteqn{G_0 \osum_{\wh{G}_1} G_1 \osum_{\wh{G}_2} G_2 \osum_{\wh{G}_3} \cdots \osum_{\wh{G}_n} G_n}\\
&\cong G_0 \osum_{(\wh{G}_1 \osum_{\wh{G}_2} \wh{G}_2 \osum_{\wh{G}_3} \cdots \osum_{\wh{G}_n} \wh{G}_n)} (G_1 \osum_{\wh{G}_2} G_2 \osum_{\wh{G}_3} \cdots \osum_{\wh{G}_n} G_n).
\end{align}
\end{corollary}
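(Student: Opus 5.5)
I will prove Corollary \ref{cor:iosum} by induction on $n$, using Proposition \ref{prop:iosum} (iii) as the only structural input. For brevity, write $L_n \cong G_0 \osum_{\wh{G}_1} G_1 \osum_{\wh{G}_2} \cdots \osum_{\wh{G}_n} G_n$ for the left-hand side, which by convention is parenthesized from the left. Similarly, write $R_n \cong G_1 \osum_{\wh{G}_2} \cdots \osum_{\wh{G}_n} G_n$ and $\wh{R}_n \cong \wh{G}_1 \osum_{\wh{G}_2} \cdots \osum_{\wh{G}_n} \wh{G}_n$, both also parenthesized from the left. The claim is then $L_n \cong \add{G_0}{R_n}{\wh{R}_n}$.

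For the base case $n = 1$, we have $R_1 \cong G_1$ and $\wh{R}_1 \cong \wh{G}_1$, so both sides are literally $\add{G_0}{G_1}{\wh{G}_1}$ and there is nothing to prove.

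For the induction step, fix $n \geq 2$ and assume the statement for $n-1$. This assumption applies to the same sequences truncated at index $n-1$. By the left-to-right convention, $L_n \cong \add{L_{n-1}}{G_n}{\wh{G}_n}$. The induction hypothesis gives $L_{n-1} \cong \add{G_0}{R_{n-1}}{\wh{R}_{n-1}}$, so
\begin{align}
L_n \cong \add{(\add{G_0}{R_{n-1}}{\wh{R}_{n-1}})}{G_n}{\wh{G}_n}.
\end{align}
Next I apply Proposition \ref{prop:iosum} (iii) with $G \cong G_0$, $H \cong R_{n-1}$, $\wh{H} \cong \wh{R}_{n-1}$, $J \cong G_n$, and $\wh{J} \cong \wh{G}_n$. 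This rewrites the right-hand side as
\begin{align}
\add{G_0}{(\add{R_{n-1}}{G_n}{\wh{G}_n})}{(\add{\wh{R}_{n-1}}{\wh{G}_n}{\wh{G}_n})}.
\end{align}
By the left-to-right convention again, $\add{R_{n-1}}{G_n}{\wh{G}_n} \cong R_n$ and $\add{\wh{R}_{n-1}}{\wh{G}_n}{\wh{G}_n} \cong \wh{R}_n$. This yields the claim for $n$.

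The one point needing care is the indexing when $n = 2$, since $R_1$ and $\wh{R}_1$ are single games rather than chains. The inductive identities $R_2 \cong \add{R_1}{G_2}{\wh{G}_2}$ and $\wh{R}_2 \cong \add{\wh{R}_1}{\wh{G}_2}{\wh{G}_2}$ still hold directly from the definitions, so the argument goes through unchanged. Beyond that, the only thing to check is that the parenthesization convention is applied consistently to all three chains $L$, $R$, and $\wh{R}$. I do not expect any real obstacle: all of the substance is in Proposition \ref{prop:iosum} (iii).
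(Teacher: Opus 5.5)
Your proposal is correct and is essentially the paper's proof: induction on $n$ with a trivial base case, where the inductive step writes the chain as $\add{L_{n-1}}{G_n}{\wh{G}_n}$, applies the induction hypothesis to $L_{n-1}$, and then uses Proposition~\ref{prop:iosum}~(iii) with $G \cong G_0$, $J \cong G_n$, $\wh{J} \cong \wh{G}_n$, and $H$, $\wh{H}$ the two length-$(n-1)$ chains. Your remark about the $n = 2$ indexing is harmless; the left-to-right convention makes $R_2 \cong \add{R_1}{G_2}{\wh{G}_2}$ and $\wh{R}_2 \cong \add{\wh{R}_1}{\wh{G}_2}{\wh{G}_2}$ hold by definition, exactly as you say.
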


\begin{proof}[Proof of Corollary \ref{cor:iosum}]
We prove this induction on $n$.
The base case $n = 1$ is trivial.
For the induction step $n \geq 2$, we have
\begin{align}
\lefteqn{G_0 \osum_{\wh{G}_1} G_1 \osum_{\wh{G}_2} G_2 \osum_{\wh{G}_3} \cdots \osum_{\wh{G}_{n-1}} G_{n-1} \osum_{\wh{G}_n} G_n}\\
&\eqlab{A}\cong G_0 \osum_{ (\wh{G}_1 \osum_{\wh{G}_2} \wh{G}_2 \osum_{\wh{G}_3} \cdots \osum_{\wh{G}_{n-1}} \wh{G}_{n-1}) } (G_1 \osum_{\wh{G}_2} G_2 \osum_{\wh{G}_3} \cdots \osum_{\wh{G}_{n-1}} G_{n-1}) \osum_{\wh{G}_n} G_n\\
&\eqlab{B}\cong G_0 \osum_{( (\wh{G}_1 \osum_{\wh{G}_2} \wh{G}_2 \osum_{\wh{G}_3} \cdots \osum_{\wh{G}_{n-1}} \wh{G}_{n-1}) \osum_{\wh{G}_{n}} \wh{G}_n )} ((G_1 \osum_{\wh{G}_2} G_2 \osum_{\wh{G}_3} \cdots \osum_{\wh{G}_{n-1}} G_{n-1}) \osum_{\wh{G}_n} G_n)\\
&\cong G_0 \osum_{ (\wh{G}_1 \osum_{\wh{G}_2} \wh{G}_2 \osum_{\wh{G}_3} \cdots \osum_{\wh{G}_{n-1}} \wh{G}_{n-1} \osum_{\wh{G}_{n}} \wh{G}_n) } (G_1 \osum_{\wh{G}_2} G_2 \osum_{\wh{G}_3} \cdots \osum_{\wh{G}_{n-1}} G_{n-1} \osum_{\wh{G}_n} G_n),
\end{align}
where
(A) follows from the induction hypothesis,
and (B) follows from Proposition \ref{prop:iosum} (iii) with
$G \defeq\cong G_0$, $J \defeq\cong G_n$, $\wh{J} \defeq\cong \wh{G}_n$,
\begin{gather}
H \defeq\cong (G_1 \osum_{\wh{G}_2} G_2 \osum_{\wh{G}_3} \cdots \osum_{\wh{G}_{n-1}} G_{n-1}),\\
\wh{H} \defeq\cong (\wh{G}_1 \osum_{\wh{G}_2} \wh{G}_2 \osum_{\wh{G}_3} \cdots \osum_{\wh{G}_{n-1}} \wh{G}_{n-1}).
\end{gather}
\end{proof}

The next proposition provides the relation between ordinal sums with substitution and ordinary ordinal sums.

\begin{proposition}~
\label{prop:iosum-osum}
\begin{enumerate}[label={\rm(\roman*)}]
\item For any $G, H \in \short$, we have $\add{G}{H}{\es} \cong G \osum H$.
\item For any $G, H, \wh{H}, J \in \short$, we have $(\add{G}{H}{\wh{H}}) \osum J \cong \add{G}{ (H \osum J) }{\wh{H}}$.
\item For any $G, H, \wh{H} \in \short$, we have $\add{G}{H}{\wh{H}} \cong (\add{G}{\es}{\wh{H}}) \osum H$.
\end{enumerate}
\end{proposition}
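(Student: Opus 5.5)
All three parts are identities of games, so I would prove each by structural induction, in the same style as Proposition \ref{prop:iosum}. In each case I unfold both sides once using Equations \eqref{eq:iosum} and \eqref{eq:lp5v7yq4pmml}, and then identify the option sets term by term via the induction hypothesis.

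For (i), I induct on $\birth(G)+\birth(H)$. By Equation \eqref{eq:iosum},
\begin{align}
\add{G}{H}{\es} \cong \{ (\add{G'}{\es}{\es}) \mid G' \in G\} \cup \{ (\add{G}{H'}{\es}) \mid H' \in H\}.
\end{align}
Proposition \ref{prop:iosum} (i) turns the first family into $\{G' \mid G' \in G\}$. The induction hypothesis turns the second family into $\{G \osum H' \mid H' \in H\}$. This is exactly the right-hand side of Equation \eqref{eq:lp5v7yq4pmml}.

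For (ii), I induct on $\birth(G)+\birth(H)+\birth(J)$. I first expand the left side $(\add{G}{H}{\wh{H}}) \osum J$ by Equation \eqref{eq:lp5v7yq4pmml}. Its options are:
\begin{itemize}
\item the options $K$ of $\add{G}{H}{\wh{H}}$, namely $\add{G'}{\wh{H}}{\wh{H}}$ and $\add{G}{H'}{\wh{H}}$;
\item the games $(\add{G}{H}{\wh{H}}) \osum J'$.
\end{itemize}
Next I expand the right side $\add{G}{(H\osum J)}{\wh{H}}$. Its options are:
\begin{itemize}
\item $\add{G'}{\wh{H}}{\wh{H}}$;
\item $\add{G}{K}{\wh{H}}$ for $K \in H \osum J$, that is, $\add{G}{H'}{\wh{H}}$ and $\add{G}{(H\osum J')}{\wh{H}}$.
\end{itemize}
The first two kinds of option coincide on the nose. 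The last kind matches $(\add{G}{H}{\wh{H}}) \osum J'$ by the induction hypothesis, since $\birth(J')<\birth(J)$.

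Part (iii) then follows directly. Apply (ii) with $H$ replaced by $\es$ and $J$ by $H$ to get
\begin{align}
(\add{G}{\es}{\wh{H}}) \osum H \cong \add{G}{(\es \osum H)}{\wh{H}},
\end{align}
and Proposition \ref{prop:osum} (ii) gives $\es \osum H \cong H$.

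The only delicate point is bookkeeping in (ii). The left game's moves inside $H$ must be seen to keep $J$, while its moves in $G$ discard $J$. On the right, the substitution by $\wh{H}$ discards $H\osum J$ entirely, which is consistent with this. I expect no genuine obstacle beyond choosing an induction measure that decreases in every recursive call. Here $G$ and $H$ never grow in (ii); only $J$ shrinks, or else no recursion is needed.
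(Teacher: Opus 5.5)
Your proof is correct. Parts (i) and (iii) are the paper's arguments. Part (ii) takes a different route.

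The paper does not unfold (ii) directly. It proceeds in three steps:
\begin{itemize}
\item it rewrites $(\add{G}{H}{\wh{H}}) \osum J \cong \add{(\add{G}{H}{\wh{H}})}{J}{\es}$ by (i);
\item it applies the reassociation identity of Proposition~\ref{prop:iosum} (iii) to get $\add{G}{(\add{H}{J}{\es})}{(\add{\wh{H}}{\es}{\es})}$;
\item it simplifies the two inner games to $H \osum J$ and $\wh{H}$ using (i) and Proposition~\ref{prop:osum} (ii).
\end{itemize}
That makes (ii) a short algebraic corollary. It also presents the ordinary ordinal sum as substitution by $\es$ inside the general reassociation law.

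Your route uses only Definitions~\ref{def:osum} and~\ref{def:iosum}. It does not depend on (i) or on Proposition~\ref{prop:iosum} (iii). It also shows that the recursion is really on $\birth(J)$ alone, since the $G$-options and $H$-options agree with no induction at all.

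Avoiding Proposition~\ref{prop:iosum} (iii) is a real advantage. The paper proves it by induction on $\birth(G)+\birth(H)+\birth(J)$. That quantity need not decrease in the $G'$-branch, where $H$ and $J$ are replaced by $\wh{H}$ and $\wh{J}$. So that proof needs a lexicographic measure on $(\birth(G),\birth(H),\birth(J))$ to be airtight, and your argument carries no such dependency.

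One wording slip in your closing remark: on the left side, a move inside $H$ does not keep $J$. It produces $\add{G}{H'}{\wh{H}}$ with $J$ discarded. The right side behaves the same way, because the option $H'$ of $H \osum J$ has already dropped $J$. That is exactly why this family matches on the nose. Your displayed option lists already reflect this, so only the prose needs correcting.
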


\begin{proof}[Proof of Proposition \ref{prop:iosum-osum}]
(Proof of (i))
We prove this by induction on $\birth(G) + \birth(H)$.
We have
\begin{align}
\add{G}{H}{\es}
&\eqlab{A}\cong \{ (\add{G'}{\es}{\es}) \mid G' \in G\} \cup \{ (\add{G}{H'}{\es}) \mid H' \in H \}\\
&\eqlab{B}\cong \{ G' \mid G' \in G\} \cup \{ (\add{G}{H'}{\es}) \mid H' \in H \}\\
&\eqlab{C}\cong \{ G' \mid G' \in G\} \cup \{ (G \osum H') \mid H' \in H \}\\
&\eqlab{D}\cong G \osum H,
\end{align}
where
(A) follows from Equation \eqref{eq:iosum},
(B) follows from Proposition \ref{prop:iosum} (i),
(C) follows from the induction hypothesis,
and (D) follows from Equation \eqref{eq:lp5v7yq4pmml}.

(Proof of (ii))
We have
\begin{align}
(\add{G}{H}{\wh{H}}) \osum J
&\eqlab{A}\cong \add{(\add{G}{H}{\wh{H}})}{J}{\es} \\
&\eqlab{B}\cong \add{G}{(\add{H}{J}{\es})}{  (\add{\wh{H}}{\es}{\es})  } \\
&\eqlab{C}\cong \add{G}{( H \osum J)}{  (\wh{H} \osum \es)  } \\
&\eqlab{D}\cong \add{G}{ (H \osum J) }{\wh{H}},
\end{align}
where
(A) follows from (i) of this proposition,
(B) follows from Proposition \ref{prop:iosum} (iii),
(C) follows from (i) of this proposition,
and (D) follows from Proposition \ref{prop:osum} (ii)

(Proof of (iii))
We have
\begin{align}
\add{G}{H}{\wh{H}}
\eqlab{A}\cong \add{G}{(\es \osum H)}{\wh{H}}
\eqlab{B}\cong (\add{G}{\es}{\wh{H}}) \osum H,
\end{align}
where
(A) follows from Proposition \ref{prop:osum} (ii),
and (B) follows from (ii) of this proposition.
\end{proof}

\subsection{The Variation Set of Ordinal Sums with Substitution}
\label{subsec:iosum-vset}

As seen in Proposition \ref{prop:vset-sum},
to determine the outcome of a mixture of disjunctive sums and ordinal sums,
it is sufficient to know the variation sets of all components.
So, what happens if we consider a mixture of disjunctive sums and ordinal sums with substitution instead?
At first glance, it might seem that we would need even more detailed information than the variation set for each component.
However, in fact, the variation set alone is sufficient, and the variation set of an ordinal sum with substitution can be given by a simple expression.
We present this as one of the main theorems of this paper as below,
where $A + x \defeq= \{a + x \mid a \in A\}$ for $A \subseteq \uint$ and $x \in \uint$.

\begin{theorem}
\label{thm:iosum-vset}
For any $G, H, \wh{H} \in \short$, we have
\begin{align}
\vset(\add{G}{H}{\wh{H}}) = (\vset(G) + \mex\vset(\wh{H})) \osum \vset(H).
\end{align}
\end{theorem}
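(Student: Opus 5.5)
Let $m \defeq= \mex\vset(\wh{H}) = \grundy(\wh{H})$. The plan is a two-step argument. First we show that $\grundy(\add{G}{\wh{H}}{\wh{H}}) = \grundy(G) + m$ for every $G$. Then we feed this into the definition of the variation set, using Proposition \ref{prop:iosum-osum} (iii) to reduce the general case to an ordinary ordinal sum.

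\emph{Step 1 (the key lemma).} We claim that for all $G \in \short$,
\begin{align}
\vset(\add{G}{\es}{\wh{H}}) = \vset(G) + m
\quad\text{and}\quad
\grundy(\add{G}{\wh{H}}{\wh{H}}) = \grundy(G) + m .
\end{align}
We prove both by simultaneous induction on $\birth(G)$.

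The options of $\add{G}{\es}{\wh{H}}$ are exactly $\add{G'}{\wh{H}}{\wh{H}}$ for $G' \in G$. By the induction hypothesis these have Grundy numbers $\grundy(G') + m$, which gives the first equality.

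For the second, Proposition \ref{prop:iosum-osum} (iii) gives $\add{G}{\wh{H}}{\wh{H}} \cong (\add{G}{\es}{\wh{H}}) \osum \wh{H}$. Proposition \ref{prop:vset-sum} (ii) then yields
\begin{align}
\vset(\add{G}{\wh{H}}{\wh{H}}) = (\vset(G)+m) \osum \vset(\wh{H}),
\end{align}
and its mex is $x_{m}$ by the remark before Lemma \ref{lem:set-osum}. Here $x_0 < x_1 < \cdots$ enumerate $\uint \setminus (\vset(G)+m)$.

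It remains to check that $x_m = \grundy(G) + m$, and this is the only real point to verify. Let $g = \grundy(G)$. The complement of $\vset(G)+m$ contains all of $0,\ldots,m-1$, which gives the first $m$ elements. Among the numbers $\geq m$, the complement consists of $m+k$ with $k \notin \vset(G)$, and the smallest such $k$ is $g = \mex\vset(G)$. Hence $x_m = m + g$.

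\emph{Step 2.} For general $H$, apply Proposition \ref{prop:iosum-osum} (iii) again: $\add{G}{H}{\wh{H}} \cong (\add{G}{\es}{\wh{H}}) \osum H$. Proposition \ref{prop:vset-sum} (ii) together with Step 1 then gives
\begin{align}
\vset(\add{G}{H}{\wh{H}}) = \vset(\add{G}{\es}{\wh{H}}) \osum \vset(H) = (\vset(G)+m)\osum\vset(H),
\end{align}
which is the theorem.

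The main obstacle is organizing the induction correctly. The first identity in Step 1 for $G$ needs the second identity only for options $G'$, which have strictly smaller birthday. The second identity for $G$ then needs the first for $G$ itself, which has just been established. So the induction is well founded. Alternatively, one could avoid Proposition \ref{prop:iosum-osum} and unfold Definition \ref{def:iosum} directly, inducting on $\birth(H)$ as well, but the reduction to ordinal sums seems cleaner.
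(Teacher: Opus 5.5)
Your proof is correct and is essentially the paper's argument. The paper also reduces to $(\add{G}{\es}{\wh{H}}) \osum H$ via Proposition~\ref{prop:iosum-osum}~(iii) and Proposition~\ref{prop:vset-sum}~(ii), handles the $H \cong \es$ case through the Grundy numbers of the options $\add{G'}{\wh{H}}{\wh{H}}$, and isolates your computation $x_m = \grundy(G) + m$ as Lemma~\ref{lem:mex-add}; the only difference is that the paper inducts on $\birth(G)(\birth(\wh{H})+1) + \birth(H)$, whereas your simultaneous two-part induction on $\birth(G)$ alone (with $\wh{H}$ fixed) is a slightly tidier organization of the same steps.
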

Namely, replacing with $\wh{H}$ has the effect of adding $\grundy(\wh{H})$ to the entire variation set.
Using this formula with Proposition \ref{prop:vset-sum} (i) enables us to obtain the variation set of a mixture of disjunctive sums and ordinal sums with substitution.

The proof of Theorem \ref{thm:iosum-vset} relies on the following lemma.
\begin{lemma}
\label{lem:mex-add}
For any finite sets $A, B \subseteq \uint$, we have
\begin{align}
\mex ( (A + b) \osum B ) = a + b,
\end{align}
where $a \defeq= \mex A$ and $b \defeq= \mex B$.
\end{lemma}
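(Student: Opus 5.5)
The plan is to compute $\mex((A+b)\osum B)$ straight from Definition~\ref{def:osum-sets}, using the observation stated just before Lemma~\ref{lem:set-osum}: $\mex(C \osum B) = x_{\mex B}$, where $x_0 < x_1 < \cdots$ enumerates $\uint \setminus C$. Here we take $C \defeq= A + b$. It then suffices to show that the $b$-th smallest element (counting from index $0$) of $\uint \setminus (A+b)$ equals $a+b$.

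The key step is to describe the small elements of $\uint \setminus (A+b)$. Every element of $A+b$ is at least $b$, so $0, 1, \ldots, b-1$ all lie outside $A+b$. Hence $x_i = i$ for $i = 0, 1, \ldots, b-1$. Next, since $a = \mex A$, we have $\{0, 1, \ldots, a-1\} \subseteq A$, so $\{b, b+1, \ldots, b+a-1\} \subseteq A+b$. On the other hand $a \notin A$, so $a+b \notin A+b$. Therefore the next element of the complement after $b-1$ is exactly $a+b$, that is, $x_b = a+b$. Combined with $\mex(C\osum B) = x_{\mex B} = x_b$, this gives the claim.

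For completeness, I would justify the identity $\mex(C \osum B) = x_{\mex B}$ in one line. The set $C \osum B$ contains $C$ and $x_0, \ldots, x_{b-1}$, and these together cover every integer below $x_b$. It omits $x_b$, because $x_b \notin C$, $b \notin B$, and the $x_i$ are distinct.

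There is no real obstacle here. The only care needed is the indexing: the enumeration starts at $x_0$, so $x_b$ is the $(b+1)$-st smallest element of the complement. The edge cases $a = 0$ or $b = 0$ are covered by the same argument, since the relevant intervals are then empty.
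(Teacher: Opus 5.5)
Your proof is correct and follows essentially the same route as the paper. Both arguments identify the complement of $A+b$ as beginning $0, 1, \ldots, b-1, a+b$, so that its element of index $b$ is $a+b$. The paper then checks directly that $(A+b) \osum B \supseteq \{0, 1, \ldots, a+b-1\}$ and $a+b \notin (A+b) \osum B$, while you verify the general identity $\mex(C \osum B) = x_{\mex B}$ once and apply it with $C = A+b$.
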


\begin{proof}[Proof of Lemma \ref{lem:mex-add}]
Let $\uint \setminus A = \{x_0, x_1, x_2, \ldots \}$ and $x_0 < x_1 < x_2 < \cdots$.\
Then we have $a = x_0$, that is,
\begin{align}
A \owns 0, 1, 2, \ldots, a-1, \,\,\text{and}\,\, A \not\owns a.
\end{align}
Hence, putting $b \defeq{=} \mex B$, we have
\begin{align}
(A+b) &\not\owns 0, 1, 2, \ldots, b-1, \label{eq:jtljke8qgtd1}\\
(A+b) &\owns b, b+1, b+2, \ldots, b+a-1, \label{eq:gmidwozpcfqz}\\
(A+b) &\not\owns b+a. \label{eq:4gtqcw0kkm81}
\end{align}
Let $\uint \setminus (A+b) = \{y_0, y_1, y_2, \ldots \}$ and $y_0 < y_1 < y_2 < \cdots$.
Then by Equations \eqref{eq:jtljke8qgtd1}, \eqref{eq:gmidwozpcfqz}, and \eqref{eq:4gtqcw0kkm81}, we have
\begin{align}
y_i &= i \qquad \text{for all}\,\, i \in \{0, 1, 2, \ldots, b-1\}, \label{eq:aolqdyjpkzkb}\\
y_b &= b+a. \label{eq:lb7gti4oqwnr}
\end{align}
Since $b = \mex B$, we have
\begin{align}
B &\owns 0, 1, 2, \ldots, b-1 \label{eq:knm0un03axrr}
\end{align}
and $B \not\owns b$,
so that
\begin{align}
\{y_i \mid i \in B\} \not\owns y_b \eqlab{A}= b+a, \label{eq:sqmfjkc664ry}
\end{align}
where (A) follows from Equation \eqref{eq:lb7gti4oqwnr}.
Therefore, we have
\begin{align}
(A+b)\osum B
&= (A+b) \cup \{y_i \mid i \in B\}\\
&\eqlab{A}\supseteq \{b, b+1, b+2, \ldots, b+a-1\} \cup \{y_0, y_1, y_2, \ldots, y_{b-1}\}\\
&\eqlab{B}= \{b, b+1, b+2, \ldots, b+a-1\} \cup \{0, 1, 2, \ldots, b-1\}\\
&= \{0, 1, 2, \ldots, b+a-1\} \label{eq:mbgi488fuy8c}
\intertext{and}
(A+b)\osum B &= (A+b) \cup \{y_i \mid i \in B\} \eqlab{C}{\not\owns} b+a, \label{eq:y81swp7jhunp}
\end{align}
where
(A) follows from Equations \eqref{eq:gmidwozpcfqz} and \eqref{eq:knm0un03axrr},
(B) follows from Equation \eqref{eq:aolqdyjpkzkb},
and (C) follows from Equations \eqref{eq:4gtqcw0kkm81} and \eqref{eq:sqmfjkc664ry}.
Equations \eqref{eq:mbgi488fuy8c} and \eqref{eq:y81swp7jhunp} lead to $\mex ((A+b)\osum B) = a + b$ as desired.
\end{proof}

Using the lemma above, we prove the theorem as follows.
\begin{proof}[Proof of Theorem \ref{thm:iosum-vset}]
We prove this by induction on
\begin{align}
\beta(G, H, \widehat{H}) \defeq= \birth(G)(\birth(\wh{H})+1) + \birth(H).
\end{align}
Define $\wh{h} \defeq= \mex\vset(\wh{H})$.
We consider the following two cases.

\vskip 5pt\noindent {\tt Case 1:} $H \not\cong \es$.
We have
\begin{align}
\vset(\add{G}{H}{\wh{H}})
&\eqlab{A}= \vset( (\add{G}{\es}{\wh{H}}) \osum H)\\
&\eqlab{B}= \vset(\add{G}{\es}{\wh{H}}) \osum \vset(H)\\
&\eqlab{C}= ( (\vset(G) + \wh{h}) \osum \vset(\es) ) \osum \vset(H)\\
&= ( (\vset(G) + \wh{h}) \osum \es ) \osum \vset(H)\\
&= (\vset(G) + \wh{h}) \osum \vset(H)
\end{align}
as desired, where
(A) follows from Proposition \ref{prop:iosum-osum} (iii),
(B) follows from Proposition \ref{prop:vset-sum} (ii),
and (C) follows from the induction hypothesis because
\begin{align}
\beta(G, \es, \wh{H})
= \birth(G)(\birth(\wh{H}) + 1)
< \birth(G)(\birth(\wh{H}) + 1) + \birth(H)
= \beta(G, H, \wh{H})
\end{align}
by $H \not\cong \es$.

\vskip 5pt\noindent {\tt Case 2:} $H \cong \es$.
We have
\begin{align}
\vset(\add{G}{H}{\wh{H}})
&= \vset(\add{G}{\es}{\wh{H}})\\
&\eqlab{A}= \{\mex \vset(\add{G'}{\wh{H}}{\wh{H}}) \mid G' \in G\}\\
&\eqlab{B}= \{\mex ( (\vset(G')+ \wh{h}) \osum \vset(\wh{H}) ) \mid G' \in G\}\\
&\eqlab{C}= \{ (\mex \vset(G')) + \wh{h} \mid G' \in G\}\\
&= \{ \mex \vset(G')\mid G' \in G\} + \wh{h}\\
&\eqlab{D}= \vset(G) + \wh{h}\\
&= (\vset(G) + \wh{h}) \osum \es\\
&= (\vset(G) + \wh{h}) \osum \vset(\es), 
\end{align}
where
(A) follows from Equation \eqref{eq:iosum} and Proposition \ref{prop:vset} (ii),
(B) follows from the induction hypothesis by Equation \eqref{eq:44cf5enayvhl},
(C) follows from Lemma \ref{lem:mex-add},
and (D) follows from Proposition \ref{prop:vset} (ii).
\end{proof}

To see the outcome $o(\add{G}{H}{\wh{H}})$,
it suffices to see the outcome of each component as the following corollary to Theorem \ref{thm:iosum-vset} (cf.~Proposition \ref{prop:osum-outcome}).

\begin{corollary}
\label{cor:iosum-outcome}
For any $G, H, \wh{H} \in \short$, we have
\begin{align}
o(\add{G}{H}{\wh{H}}) =
\begin{cases}
\PP  &\,\,\text{if}\,\, o(H) = \PP \,\,\text{and}\,\, (o(G) = \PP \,\,\text{or}\,\, o(\wh{H}) = \NN),\\
\NN  &\,\,\text{otherwise}.
\end{cases}
\end{align}
\end{corollary}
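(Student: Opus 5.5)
We will derive the corollary directly from Theorem~\ref{thm:iosum-vset} combined with Proposition~\ref{prop:vset} (iii). That proposition says $o(\add{G}{H}{\wh{H}}) = \PP$ if and only if $0 \notin \vset(\add{G}{H}{\wh{H}})$. The theorem rewrites this variation set as $(\vset(G) + \wh{h}) \osum \vset(H)$, where $\wh{h} \defeq= \mex\vset(\wh{H}) = \grundy(\wh{H})$. So the task reduces to characterizing when $0$ lies in a set of the form $A \osum B$ with $A \defeq= \vset(G) + \wh{h}$ and $B \defeq= \vset(H)$.

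By Definition~\ref{def:osum-sets}, $A \osum B = A \cup \{x_i \mid i \in B\}$, where $x_0 < x_1 < \cdots$ enumerate $\uint \setminus A$. We have $0 \in A \osum B$ if and only if either $0 \in A$, or $0 \notin A$ and $0 = x_i$ for some $i \in B$. In the latter case $x_0 = 0$, so the condition becomes $0 \in B$. Hence
\begin{align}
0 \notin A \osum B \iff 0 \notin A \,\,\text{and}\,\, 0 \notin B.
\end{align}

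Next we translate each condition into outcomes.
\begin{enumerate}[label={\rm(\roman*)}]
\item By Proposition~\ref{prop:vset} (iii), $0 \notin B = \vset(H)$ is equivalent to $o(H) = \PP$.
\item We have $0 \in \vset(G) + \wh{h}$ if and only if $\wh{h} = 0$ and $0 \in \vset(G)$, since every element of $\vset(G)$ is nonnegative. Therefore $0 \notin A$ if and only if $\wh{h} \neq 0$ or $0 \notin \vset(G)$.
\item By Proposition~\ref{prop:grundy} (i), $\wh{h} = \grundy(\wh{H}) \neq 0$ means $o(\wh{H}) = \NN$.
\item By Proposition~\ref{prop:vset} (iii) again, $0 \notin \vset(G)$ means $o(G) = \PP$.
\end{enumerate}
Combining these gives exactly the stated case distinction.

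There is no real obstacle. The only point needing care is the $\osum$-on-sets step: we must argue that $x_0 = 0$ whenever $0 \notin A$, so that $0 = x_i$ forces $i = 0$ and hence requires $0 \in B$.
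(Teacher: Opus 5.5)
Your proposal is correct and follows essentially the paper's own route: apply Theorem~\ref{thm:iosum-vset} together with Proposition~\ref{prop:vset} (iii), and then determine when $0$ lies in $(\vset(G)+\mex\vset(\wh{H}))\osum\vset(H)$. The only difference is cosmetic. You state this as the single equivalence $0\notin A\osum B \iff (0\notin A$ and $0\notin B)$, whereas the paper splits into the cases $o(H)=\NN$ and $o(H)=\PP$.
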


\begin{proof}[Proof of Corollary \ref{cor:iosum-outcome}]
We consider the following two cases.

\vskip 5pt\noindent {\tt Case 1:} $o(H) = \NN$.
We have
\begin{align}
\vset(\add{G}{H}{\wh{H}})
\eqlab{A}= (\vset(G) + \mex\vset(\wh{H})) \osum \vset(H)
\eqlab{B}\supseteq (\vset(G) + \mex\vset(\wh{H})) \osum \{0\}
\eqlab{C}\owns 0,
\end{align}
where
(A) follows from Theorem \ref{thm:iosum-vset},
(B) follows from $\vset(H) \owns 0$ since $o(H) = \NN$,
and (C) follows from Definition \ref{def:osum-sets}.
This is equivalent to $o(\add{G}{H}{\wh{H}}) = \NN$.

\vskip 5pt\noindent {\tt Case 2:} $o(H) = \PP$.
We have
\begin{align}
o(\add{G}{H}{\wh{H}}) = \NN
&\iff \vset(\add{G}{H}{\wh{H}}) \owns 0\\
&\eqlab{A}\iff (\vset(G) + \mex\vset(\wh{H})) \osum \vset(H) \owns 0\\
&\eqlab{B}\iff (\vset(G) + \mex\vset(\wh{H})) \owns 0\\
&\iff \vset(G) \owns 0 \,\,\text{and}\,\, \mex\vset(\wh{H}) = 0\\
&\iff o(G) = \NN \,\,\text{and}\,\, o(\wh{H}) = \PP,
\end{align}
where
(A) follows from Theorem \ref{thm:iosum-vset},
and (B) follows from $\vset(H) \not\owns 0$ since $o(H) = \PP$.
\end{proof}

For $H$ and $\wh{H}$, only their Grundy numbers affect the Grundy number of $\add{G}{H}{\wh{H}}$ as follows.
This is a counterpart of Proposition \ref{prop:colon-pri}.

\begin{corollary}~
\label{cor:iosum-grundy}
\begin{enumerate}[label={\rm(\roman*)}]
\item For any $G, H_1, H_2, \wh{H} \in \short$,
if $\grundy(H_1) = \grundy(H_2)$, then $\grundy(\add{G}{H_1}{\wh{H}}) = \grundy(\add{G}{H_2}{\wh{H}})$.
\item For any $G, H, \wh{H}_1, \wh{H}_2 \in \short$,
if $\grundy(\wh{H}_1) = \grundy(\wh{H}_2)$, then $\grundy(\add{G}{H}{\wh{H}_1}) = \grundy(\add{G}{H}{\wh{H}_2})$.
\end{enumerate}
\end{corollary}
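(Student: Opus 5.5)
Both parts follow from Theorem~\ref{thm:iosum-vset} combined with Proposition~\ref{prop:vset} (i), $\grundy(K) = \mex\vset(K)$. The strategy is to express $\grundy(\add{G}{H}{\wh{H}})$ as a quantity that visibly depends on $H$ and $\wh{H}$ only through $\grundy(H) = \mex\vset(H)$ and $\grundy(\wh{H}) = \mex\vset(\wh{H})$.

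For (i), I fix $G$ and $\wh{H}$ and set $A \defeq= \vset(G) + \mex\vset(\wh{H})$, which does not involve $H_1$ or $H_2$. By Theorem~\ref{thm:iosum-vset}, $\vset(\add{G}{H_j}{\wh{H}}) = A \osum \vset(H_j)$ for $j = 1, 2$. The hypothesis gives $\mex\vset(H_1) = \mex\vset(H_2)$, so Lemma~\ref{lem:set-osum} yields $\mex(A \osum \vset(H_1)) = \mex(A \osum \vset(H_2))$. Applying Proposition~\ref{prop:vset} (i) to both sides then gives the claim. This is the exact analogue of how Proposition~\ref{prop:colon-pri} follows from Proposition~\ref{prop:vset-sum} (ii).

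For (ii), the dependence on $\wh{H}_j$ enters the formula of Theorem~\ref{thm:iosum-vset} only through the number $\mex\vset(\wh{H}_j) = \grundy(\wh{H}_j)$. Under the hypothesis these numbers are equal, so the sets $\vset(G) + \mex\vset(\wh{H}_1)$ and $\vset(G) + \mex\vset(\wh{H}_2)$ coincide. Hence the full variation sets $\vset(\add{G}{H}{\wh{H}_1})$ and $\vset(\add{G}{H}{\wh{H}_2})$ are literally equal, which is stronger than needed, and taking $\mex$ finishes the argument.

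There is no real obstacle here. The only point needing care is checking that Lemma~\ref{lem:set-osum} applies: variation sets of short games are finite, and the left operand $A$ in (i) is the same finite set for $j = 1, 2$.
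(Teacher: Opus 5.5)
Your proof is correct and follows the paper's: part (ii) is read off directly from Theorem~\ref{thm:iosum-vset}, and part (i) reduces to Lemma~\ref{lem:set-osum} with a left operand that does not depend on $H_j$. The only cosmetic difference is in (i). There the paper obtains that left operand as $\vset(\add{G}{\es}{\wh{H}})$ via Proposition~\ref{prop:iosum-osum} (iii) and Proposition~\ref{prop:vset-sum} (ii), rather than quoting the theorem; this is the same computation as Case~1 of the theorem's proof.
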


\begin{proof}[Proof of Corollary \ref{cor:iosum-grundy}]
(Proof of (i))
Assume $\grundy(H_1) = \grundy(H_2)$.
We have
\begin{align}
\grundy(\add{G}{H_1}{\wh{H}})
&\eqlab{A}= \mex\vset(\add{G}{H_1}{\wh{H}})\\
&\eqlab{B}= \mex\vset( (\add{G}{\es}{\wh{H}}) \osum H_1 )\\
&\eqlab{C}= \mex \left( \vset(\add{G}{\es}{\wh{H}}) \osum \vset(H_1) \right), \label{eq:umnk84vmwsed}
\end{align}
where
(A) follows from Proposition \ref{prop:vset} (i),
(B) follows from Proposition \ref{prop:iosum-osum} (iii),
and (C) follows from Theorem \ref{prop:vset-sum} (ii).
Similarly, we have
\begin{align}
\grundy(\add{G}{H_2}{\wh{H}}) = \mex \left( \vset(\add{G}{\es}{\wh{H}}) \osum \vset(H_2) \right). \label{eq:a3kmyjqgh0uk}
\end{align}
By the assumption $\grundy(H_1) = \grundy(H_2)$ and Lemma \ref{lem:set-osum},
the right-hand sides of Equations \eqref{eq:umnk84vmwsed} and \eqref{eq:a3kmyjqgh0uk} are identical as desired.

(Proof of (ii)) This is directly from Theorem \ref{thm:iosum-vset}.
\end{proof}

\subsection{Examples of Application}
\label{subsec:iosum-exam}

In this subsection, we provide several examples of application of the results above.

We first consider the case of a chain of nimbers $ \st a_0 \osum_{\st \wh{a}_1} \st a_1 \osum_{\st \wh{a}_2} \st a_2 \osum_{\st \wh{a}_3} \cdots \osum_{\st \wh{a}_n} \st a_n$.
We prove that its Grundy number is given as follows.

\begin{theorem}
\label{thm:iosum-star}
Let $n \in \pint$ and $a_0, a_1, a_2, \ldots, a_n, \wh{a}_1, \wh{a}_2, \ldots, \wh{a}_n \in \uint$ be arbitrary.
Let $p$ be the maximum $p' \in \{1, 2, \ldots, n\}$ such that $\sum_{i = p'}^n (a_i - \wh{a}_i) < 0$, where $p \defeq = 0$ if such $p'$ does not exist.
Then we have
\begin{align}
\grundy \left( \st a_0 \osum_{\st \wh{a}_1} \st a_1 \osum_{\st \wh{a}_2} \st a_2 \osum_{\st \wh{a}_3} \cdots \osum_{\st \wh{a}_n} \st a_n \right) = \sum_{i = p}^n a_i.
\end{align}
\end{theorem}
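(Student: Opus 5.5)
We argue by induction on $n$, tracking the full variation set of the chain and not only its Grundy number. Write $K_n \cong \st a_0 \osum_{\st \wh{a}_1} \st a_1 \osum_{\st \wh{a}_2} \cdots \osum_{\st \wh{a}_n} \st a_n$. By the left-to-right convention, $K_n \cong \add{K_{n-1}}{\st a_n}{\st \wh{a}_n}$. Theorem~\ref{thm:iosum-vset} then gives
\begin{align}
\vset(K_n) = (\vset(K_{n-1}) + \wh{a}_n) \osum \{0,1,\ldots,a_n-1\},
\end{align}
using $\vset(\st a) = \{0,\ldots,a-1\}$ and $\mex\vset(\st \wh{a}) = \wh{a}$. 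The operation $A \osum \{0,\ldots,m-1\}$ adds the $m$ smallest elements missing from $A$. So the recursion reads: shift the current set up by $\wh{a}_n$, then fill in the $a_n$ smallest gaps.

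To make the induction close, we strengthen the hypothesis to describe the structure of $\vset(K_n)$. Shifting creates the gap $\{0,\ldots,\wh{a}_n-1\}$ at the bottom. The claim is that $\vset(K_n)$ consists of an initial interval $\{0,\ldots,g_n-1\}$ together with a set lying entirely above $g_n$, with $g_n = \grundy(K_n)$. The base case is $\vset(K_0) = \{0,\ldots,a_0-1\}$, so $g_0 = a_0$ with nothing above.

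For the step, after shifting, the set is empty on $[0,\wh{a}_n)$, full on $[\wh{a}_n, \wh{a}_n+g_{n-1})$, missing $\wh{a}_n+g_{n-1}$, and arbitrary above. Filling $a_n$ gaps gives two cases.
\begin{itemize}
\item If $a_n < \wh{a}_n$, only part of the bottom gap is filled. Then $g_n = a_n$, and the rest of the set lies above $g_n$, so the structure is maintained.
\item If $a_n \ge \wh{a}_n$, the bottom gap is filled completely, which joins it to the interval. The remaining $a_n - \wh{a}_n$ fills start at $\wh{a}_n+g_{n-1}$ and proceed through the gaps above. By Lemma~\ref{lem:mex-add}, $g_n = g_{n-1} + a_n$ only when $a_n = \wh{a}_n$; otherwise the gaps above the interval matter.
\end{itemize}

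Lemma~\ref{lem:mex-add} therefore only gives $\mex$ when exactly $\wh{a}_n$ elements are filled. Taking the formula literally, the intended recurrence appears to be: if the suffix sum is negative, reset to $a_n$; otherwise accumulate. Since the formula depends on the suffix sums $\sum_{i=p'}^n (a_i-\wh{a}_i)$ and not just the last term, an induction that adds the last component is awkward.

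The better route is to induct by peeling off the \emph{first} component using Corollary~\ref{cor:iosum}:
\begin{align}
K_n \cong \st a_0 \osum_{\wh{K}} L,
\end{align}
where $L = \st a_1 \osum \cdots \st a_n$ is the chain with $a_0$ removed and $\wh{K} = \st\wh{a}_1 \osum_{\st\wh{a}_2}\cdots\osum_{\st \wh{a}_n}\st\wh{a}_n$. Applying Lemma~\ref{lem:mex-add} to the instance with $\vset(\st a_0)=\{0,\ldots,a_0-1\}$ is cleaner: since $\vset(\st a_0)+\wh{k}$ is the interval $[\wh{k}, \wh{k}+a_0)$, filling $\ell = \grundy(L)$ gaps gives
\begin{align}
\grundy(K_n) = \begin{cases} \ell & \text{if } \ell < \wh{k},\\ a_0 + \ell & \text{if } \ell \ge \wh{k}.\end{cases}
\end{align}
Here $\wh{k} = \grundy(\wh{K})$ and $\ell$ are supplied by induction. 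For $\wh{K}$, every "$a_i-\wh{a}_i$" equals $0$, so $p=0$ and $\wh{k} = \sum_{i\ge1}\wh{a}_i$. Similarly $\ell = \sum_{i=p_L}^n a_i$, where $p_L$ is computed for the chain with $a_0$ removed.

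The main obstacle is the final combinatorial check. We must show that $\ell < \wh{k}$ exactly when some suffix sum starting at $p' \ge 1$ is negative, i.e.\ when $p \ge 1$, and that then $p = p_L$. In the other case, $p_L = $ its default and $\ell = \sum_{i\ge1} a_i$, so $\ell\ge\wh{k}$ and the answer is $\sum_{i\ge0}a_i$, as required. Two facts need checking. First, $\sum_{i\ge p_L} a_i < \sum_{i\ge1}\wh{a}_i$ whenever a negative suffix exists. Second, the converse fails to occur: if all suffix sums are nonnegative, then $p_L$ is the default and the inequality $\ell < \wh{k}$ fails. The subtle point is the indexing shift of $p$ when passing to the chain $L$, whose leading term is $a_1$ without a hat. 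We would handle it by restating the theorem to allow a general head and verifying the inequalities from the maximality of $p$.
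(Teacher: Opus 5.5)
Your final route is essentially the paper's proof: you peel off $\st a_0$ with Corollary~\ref{cor:iosum}, obtain $\grundy(\wh{K})=\sum_{i=1}^n\wh{a}_i$ and $\grundy(L)=\sum_{i=\max\{1,p\}}^n a_i$ by induction, combine them through the two-case formula for $\st a_0\osum_{\wh{K}}L$, and check that $\ell<\wh{k}$ if and only if $p\ge 1$ (the paper packages your two-case formula as Lemma~\ref{lem:iosum-star} and uses Corollary~\ref{cor:iosum-grundy} to replace $\wh{K}$ and $L$ by nimbers). Two small corrections: that formula comes from $\mex(A\osum B)=x_{\mex B}$ (the observation behind Lemma~\ref{lem:set-osum}), not from Lemma~\ref{lem:mex-add}, which only covers a shift equal to $\mex B$; and the check you leave open is just $\sum_{i=p}^n a_i<\sum_{i=p}^n\wh{a}_i\le\sum_{i=1}^n\wh{a}_i$ when $p\ge1$ (where $\max\{1,p\}=p$ is the index for $L$), versus $\sum_{i=1}^n a_i\ge\sum_{i=1}^n\wh{a}_i$ when $p=0$.
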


\begin{example}
Let $n = 6$, $(a_0, a_1, \ldots, a_n) = (6, 3, 9, 2, 1, 2, 5)$,
and $(\wh{a}_1, \wh{a}_2, \allowbreak\ldots, \wh{a}_n) \allowbreak = (10, 1, 5, 5, 0, 3)$.
Then $\sigma_{p'} \defeq= \sum_{i = p'}^n (a_i - \wh{a}_i)$ is given as
$(\sigma_1, \sigma_2, \ldots, \sigma_n)  \allowbreak= (4, -2, 5, -3, 0, 4, 2)$.
Thus, $p = 3$ and 
\begin{align}
\lefteqn{\grundy \left( \st 6 \osum_{\st 10} \st 3 \osum_{\st 1} \st 9 \osum_{\st 5} \st 2 \osum_{\st 5} \st 1 \osum_{\st 0} \st 2 \osum_{\st 3} \st 5\right)}\\
&= \grundy \left( \st a_0 \osum_{\st \wh{a}_1} \st a_1 \osum_{\st \wh{a}_2} \st a_2 \osum_{\st \wh{a}_3} \st a_3 \osum_{\st \wh{a}_4} \st a_4 \osum_{\st \wh{a}_5} \st a_5 \osum_{\st \wh{a}_6} \st a_6\right)\\
&\eqlab{A}= a_3+ a_4 + a_5 + a_6\\
&= 2 + 1 + 2 + 5\\
&= 10,
\end{align}
where (A) follows from Theorem \ref{thm:iosum-star}.
\end{example}

To prove Theorem \ref{thm:iosum-star},
we first prove the assertion for the particular case $n = 1$ as the following lemma.

\begin{lemma}
\label{lem:iosum-star}
For any $a, b, \wh{b} \in \uint$, we have
\begin{align}
\grundy(\st a \osum_{\st \wh{b}} \st b)
= \begin{cases}
a + b  &\,\,\text{if}\,\, b \geq \wh{b},\\
b  &\,\,\text{if}\,\, b < \wh{b}.
\end{cases}
\end{align}
\end{lemma}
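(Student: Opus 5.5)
The plan is to apply Theorem~\ref{thm:iosum-vset} directly with $G \cong \st a$, $H \cong \st b$, $\wh{H} \cong \st \wh{b}$, and then take the mex. Since $\vset(\st n) = \{0, 1, \ldots, n-1\}$ by Definition~\ref{def:imp-star}, we have $\mex\vset(\st \wh{b}) = \wh{b}$. The theorem then gives
\begin{align}
\vset(\st a \osum_{\st \wh{b}} \st b) = A \osum \{0, 1, \ldots, b-1\}, \qquad A \defeq= \{\wh{b}, \wh{b}+1, \ldots, \wh{b}+a-1\}.
\end{align}
It then remains to compute $\mex(A \osum \{0,\ldots,b-1\})$. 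By Definition~\ref{def:osum-sets}, this mex equals $x_b$, where $x_0 < x_1 < \cdots$ enumerate $\uint \setminus A$.

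The complement of $A$ is $\{0, 1, \ldots, \wh{b}-1\} \cup \{\wh{b}+a, \wh{b}+a+1, \ldots\}$. Hence $x_i = i$ for $i < \wh{b}$, and $x_i = i + a$ for $i \geq \wh{b}$. This splits into two cases.
\begin{itemize}
\item If $b < \wh{b}$, then $x_b = b$.
\item If $b \geq \wh{b}$, then $x_b = a + b$.
\end{itemize}
These are exactly the two cases claimed. Alternatively, the case $b = \wh{b}$ could be cross-checked against Lemma~\ref{lem:mex-add}, which gives $a + \wh{b}$.

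I do not expect a real obstacle. The only point needing care is the indexing of the complement of $A$, including the degenerate case $a = 0$, where $A = \es$ and both formulas agree since $x_b = b$. The identity $\mex(A \osum B) = x_{\mex B}$ noted before Lemma~\ref{lem:set-osum} handles the rest.
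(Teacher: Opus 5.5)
Your proposal is correct and follows essentially the same route as the paper: apply Theorem~\ref{thm:iosum-vset} with $G \cong \st a$, $H \cong \st b$, $\wh{H} \cong \st \wh{b}$, then take the mex. The only cosmetic difference is that the paper writes out the full variation set in each case before taking the mex, whereas you read the mex off directly as $x_b$ via $\mex(A \osum B) = x_{\mex B}$.
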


\begin{proof}[Proof of Lemma \ref{lem:iosum-star}]
We have
\begin{align}
\vset(\st a \osum_{\st \wh{b}} \st b)
&\eqlab{A}= (\vset(\st a) + \grundy(\st \wh{b})) \osum \vset(\st b)\\
&= (\{0, 1, 2, \ldots, a-1\} + \wh{b}) \osum \{0, 1, 2, \ldots, b-1\}\\
&= \{\wh{b}, \wh{b} + 1, \wh{b} + 2, \ldots, \wh{b} +a-1\} \osum \{0, 1, 2, \ldots, b-1\}\\
&= \begin{cases}
\{0, 1, 2, \ldots, a+b-1\}  &\,\,\text{if}\,\, b \geq \wh{b},\\
\{0, 1, 2, \ldots, b-1, \wh{b}, \wh{b} + 1, \wh{b} + 2, \ldots, \wh{b} +a-1 \}  &\,\,\text{if}\,\, b < \wh{b},
\end{cases}
\end{align}
where
(A) follows from Theorem \ref{thm:iosum-vset}.
Since $\grundy(\st a \osum_{\st \wh{b}} \st b) = \mex\vset(\st a \osum_{\st \wh{b}} \st b)$, this shows the desired result.
\end{proof}

\begin{proof}[Proof of Theorem \ref{thm:iosum-star}]
We prove this by induction on $n$.
The base case $n = 1$ is shown as Lemma \ref{lem:iosum-star}.
For the induction step $n \geq 2$, we have
\begin{align}
\lefteqn{\grundy \left( \st a_0 \osum_{\st\wh{a}_1} \st a_1 \osum_{\st\wh{a}_2} \st a_2 \osum_{\st\wh{a}_3} \cdots \osum_{\st\wh{a}_n} \st a_n \right)}\\
&\eqlab{A}= \grundy \left( \st a_0 \osum_{(\st\wh{a}_1 \osum_{\st\wh{a}_2} \st\wh{a}_2 \osum_{\st\wh{a}_3} \cdots \osum_{\st\wh{a}_n} \st\wh{a}_n)} (\st a_1 \osum_{\st \wh{a}_2} \st a_2 \osum_{\st \wh{a}_3} \cdots \osum_{\st \wh{a}_n} \st a_n) \right)\\
&\eqlab{B}= \grundy \left( \st a_0 \osum_{\st(\sum_{i = 1}^n \wh{a}_i)}  (\st a_1 \osum_{\st \wh{a}_2} \st a_2 \osum_{\st\wh{a}_3} \cdots \osum_{\st \wh{a}_n} \st a_n) \right)\\
&\eqlab{C}= \grundy \left( \st a_0 \osum_{\st(\sum_{i = 1}^n \wh{a}_i)} \st\left(\textstyle\sum_{i = \max\{1, p\}}^n a_i\right) \right)\\
&\eqlab{D}= \begin{cases}
a_0 + \textstyle\sum_{i = \max\{1, p\}}^n a_i  &\,\,\text{if}\,\, \textstyle\sum_{i = \max\{1, p\}}^n a_i \geq \textstyle\sum_{i = 1}^n \wh{a}_i,\\
\textstyle\sum_{i = \max\{1, p\}}^n a_i  &\,\,\text{if}\,\, \textstyle\sum_{i = \max\{1, p\}}^n a_i < \textstyle\sum_{i = 1}^n \wh{a}_i
\end{cases}\\
&\eqlab{E}= \begin{cases}
a_0 + \textstyle\sum_{i = \max\{1, p\}}^n a_i  &\,\,\text{if}\,\, p = 0,\\
\textstyle\sum_{i = \max\{1, p\}}^n a_i  &\,\,\text{if}\,\, p \geq 1
\end{cases}\\
&= \begin{cases}\textstyle\sum_{i = 0}^n a_i  &\,\,\text{if}\,\, p = 0,\\
\textstyle\sum_{i = p}^n a_i  &\,\,\text{if}\,\, p \geq 1
\end{cases}\\
&= \textstyle\sum_{i = p}^n a_i
\end{align}
as desired, where
(A) follows from Corollary \ref{cor:iosum},
(B) follows from the induction hypothesis and Corollary \ref{cor:iosum-grundy} (ii),
(C) follows from the induction hypothesis and Corollary \ref{cor:iosum-grundy} (i),
(D) follows from Lemma \ref{lem:iosum-star},
and (E) follows because $\sum_{i = \max\{1, p\}}^n a_i \geq \textstyle\sum_{i = 1}^n \wh{a}_i$ if and only $p = 0$ as seen as follows.

If $p = 0$, then by the definition of $p$, for all $p' = 1, 2, \ldots, n$,
it holds that $\sum_{i = p'}^n a_i \geq \textstyle\sum_{i = p'}^n \wh{a}_i$;
in particular, $\sum_{i = \max\{1, p\}}^n = \sum_{i = 1}^n a_i \geq \textstyle\sum_{i = 1}^n \wh{a}_i$ as desired.
Conversely, if $p \geq 1$, then
\begin{align}
\sum_{i = \max\{1, p\}}^n a_i
= \sum_{i = p}^n a_i
\eqlab{A}< \sum_{i = p}^n \wh{a}_i
\leq \sum_{i = 1}^n \wh{a}_i,
\end{align}
where (A) follows from the definition of $p$.
\end{proof}

As another example, we next consider the following generalization of ordinal sums with substitution to a poset structure.
\begin{quote}
A finite poset $(P, \preceq)$ is given (for example, as shown in Figure \ref{fig:poset}).
To each element $x \in P$, an impartial game $G_x$ is assigned independently. 
On a turn, a player chooses an arbitrary $x \in P$ and makes a move on the associated game $G_x$.
At that time, for every $y \in P$ such that $y \preceq x$ and $y \neq x$,
the game $G_y$ is replaced with a game $\wh{G}_y$, which is fixed in advance for each $y \in P$.
\end{quote}

\begin{figure}
\centering
\includegraphics[keepaspectratio,scale=0.3]{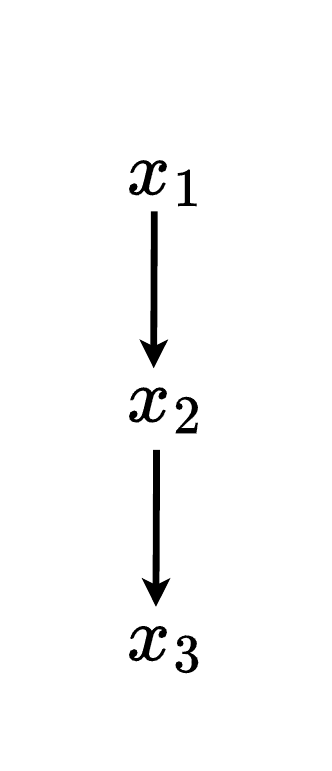}
\includegraphics[keepaspectratio,scale=0.3]{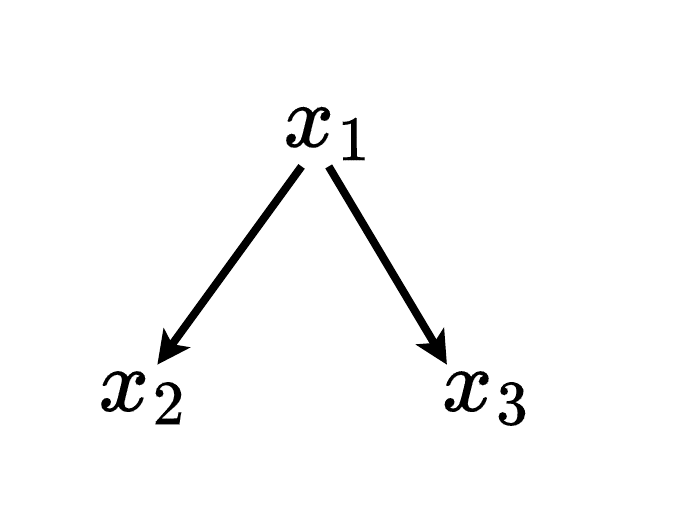}
\includegraphics[keepaspectratio,scale=0.3]{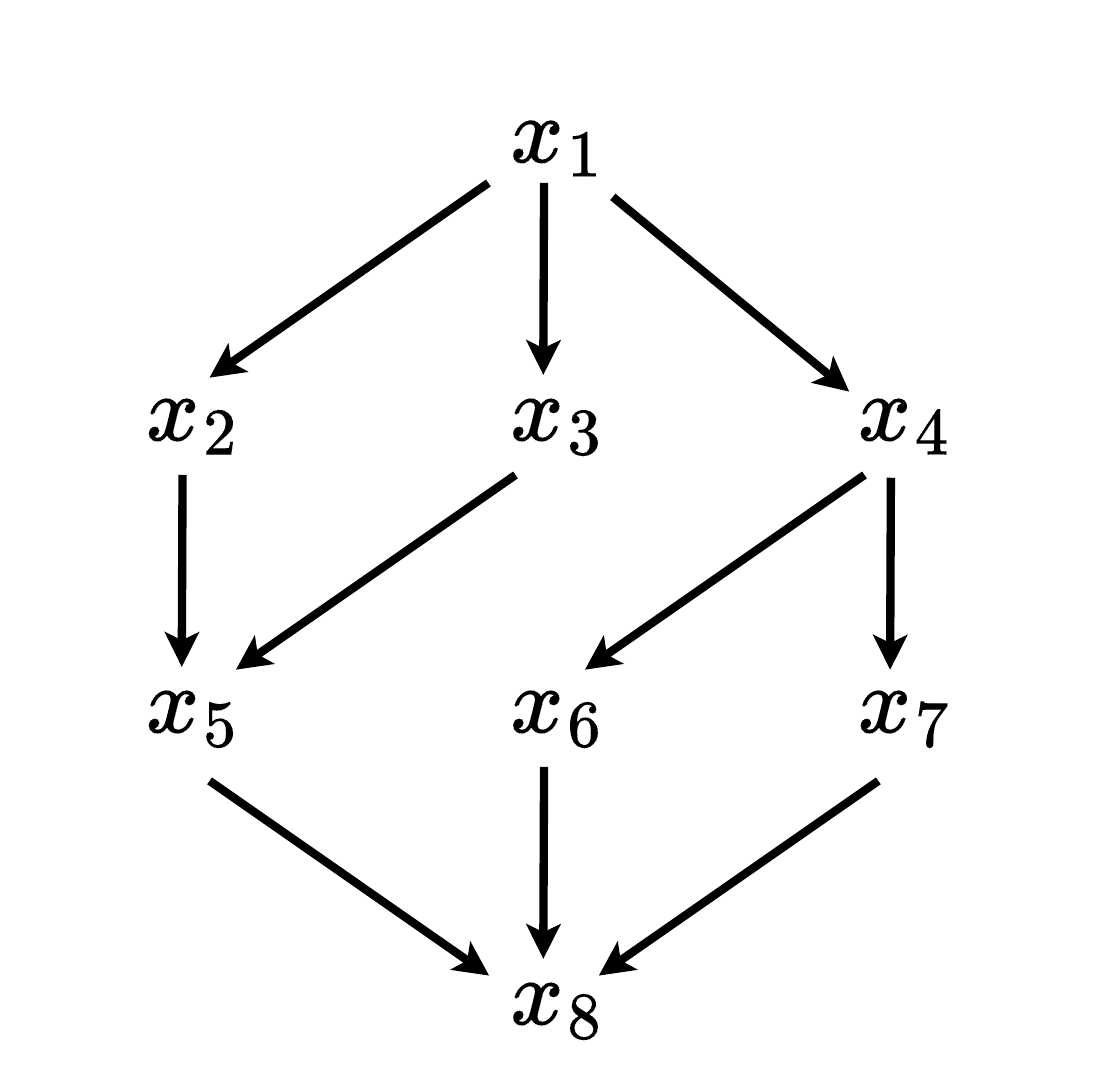}
\caption{The Hasse diagrams of three posets.}
\label{fig:poset}
\end{figure}

For example, when the given poset is the right one in Figure \ref{fig:poset}, and
a player makes a move on $G_4$ to an option $G'_4$, then
the three games $G_6, G_7, G_8$ associated to $x_6, x_7, x_8$ are replaced with $\wh{G}_6, \wh{G}_7, \wh{G}_8$,
respectively, since the smaller elements than $x_4$ are $x_6, x_7, x_8$.

Depending on the structure of a given poset,
the corresponding game may be represented by recursively taking disjunctive sums and ordinal sums,
as the following example.

\begin{example}
\label{ex:poset-exp}
If the left poset in Figure \ref{fig:poset} is given, then
the initial position is represented as $G_1 \osum_{\wh{G}_2} G_2 \osum_{\wh{G}_3} G_3$.
For the poset in the center of Figure \ref{fig:poset},
the initial position is represented as $G_1 \osum_{(\wh{G}_2 + \wh{G}_3)} (G_2 + G_3)$.
For a more complicated example, the initial position corresponding to the right one in Figure \ref{fig:poset} is
represented as 
$G_1 \osum_{\wh{H}} H \osum_{\wh{G}_8} G_8$,
where
\begin{align}
H &\defeq\cong ((G_2 + G_3) \osum_{\wh{G}_5} G_5) + (G_4 \osum_{(\wh{G}_6 + \wh{G}_7)} (G_6 + G_7)),\\
\wh{H} &\defeq\cong ((\wh{G}_2 + \wh{G}_3) \osum_{\wh{G}_5} \wh{G}_5) + (\wh{G}_4 \osum_{(\wh{G}_6 + \wh{G}_7)} (\wh{G}_6 + \wh{G}_7)).
\end{align}
\end{example}

If the structure of the poset can be expressed in terms of disjunctive sums and ordinal sums,
we can use Theorem \ref{thm:iosum-vset} to determine the variation set of the game as in the following example.

\begin{figure}
\centering
\includegraphics[keepaspectratio,scale=0.33]{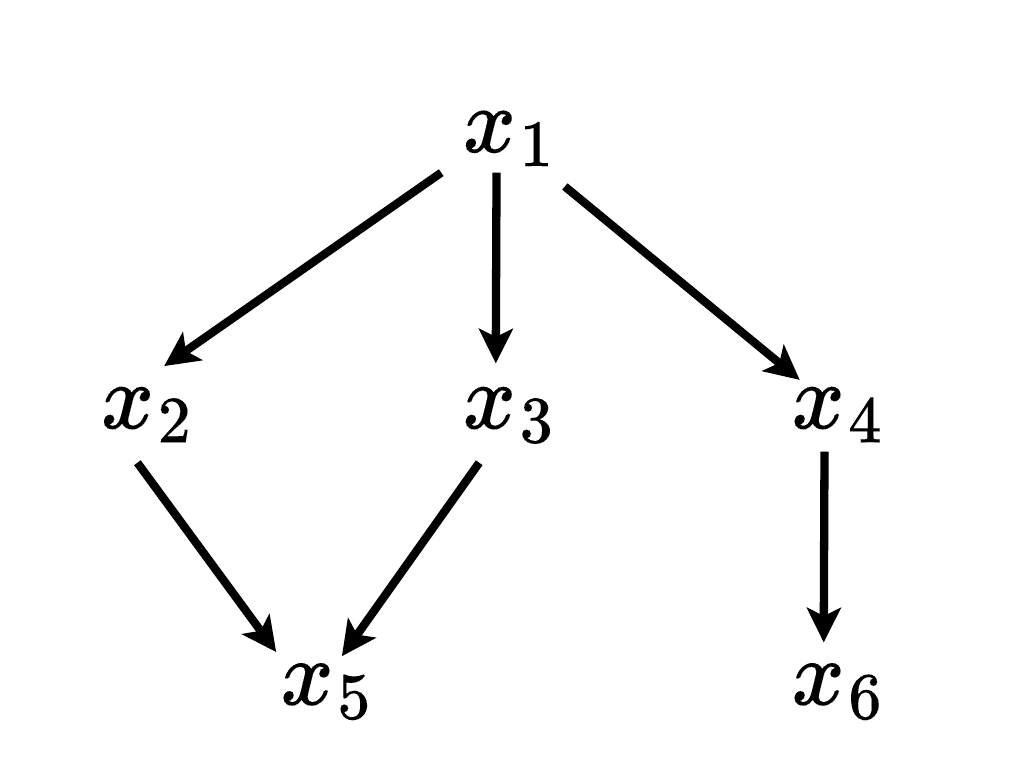}
\caption{The Hasse diagram of a poset.}
\label{fig:poset-exam}
\end{figure}

\begin{example}
Suppose that the poset in Figure \ref{fig:poset-exam} is given.
Also, let $G_i \defeq\cong \st i$ and $\wh{G}_i \defeq\cong \st (7-i)$ for $i \in \{1, 2, 3, 4, 5, 6\}$.
Then the corresponding game is represented as
\begin{align}
\lefteqn{G_1 \osum_{ \left( (\wh{G}_2 + \wh{G}_3) \osum_{\wh{G}_5} \wh{G}_5 \right) + \left(\wh{G}_4 \osum_{\wh{G}_6} \wh{G}_6 \right) }
\left( \left( (G_2 + G_3) \osum_{\wh{G}_5} G_5 \right) + \left(G_4 \osum_{\wh{G}_6} G_6 \right)  \right)}\\
&\cong \st 1 \osum_{ \left( (\st 5 + \st 4) \osum_{\st 2} \st 2 \right) + \left(\st 3 \osum_{\st 1} \st 1 \right) }
\left( \left( (\st 2 + \st 3) \osum_{\st 2} \st 5 \right) + \left(\st 4 \osum_{\st 1} \st 6 \right) \right),
\end{align}
equivalently, $\add{\st 1}{H}{\widehat{H}}$, where
\begin{align}
H &\defeq\cong \left((\st 2 + \st 3) \osum_{\st 2} \st 5 \right) + \left(\st 4 \osum_{\st 1} \st 6\right),\\
\widehat{H} &\defeq\cong \left((\st 5 + \st 4) \osum_{\st 2} \st 2 \right) + \left(\st 3 \osum_{\st 1} \st 1\right).
\end{align}
We have
\begin{align}
\vset(\st 2 + \st 3)
&\eqlab{A}= \vset(\st 2) \oplus \grundy(\st 3) \cup  \vset(\st 3) \oplus \grundy(\st 2)\\
&= \{0, 1\} \oplus 3 \cup  \{0, 1, 2\} \oplus 2\\
&= \{3, 2\} \cup \{2, 3, 0\}\\
&= \{0, 2, 3\}, \label{eq:sfbou70oiwry}
\end{align}
where
(A) follows from Proposition \ref{prop:vset-sum} (i).
Hence, we have
\begin{align}
\vset\left( (\st 2 + \st 3) \osum_{\st 2} \st 5 \right)
&\eqlab{A}= (\vset(\st 2 + \st 3) +  \grundy(\st 2)) \osum \vset(\st 5)\\
&\eqlab{B}= (\{0, 2, 3\} + 2) \osum \{0, 1, 2, 3, 4\}\\
&= \{2, 4, 5\} \osum \{0, 1, 2, 3 ,4\}\\
&= \{0, 1, 2, 3, 4, 5, 6, 7\}, \label{eq:rqo5aj5tvp8u}
\end{align}
where
(A) follows from Theorem \ref{thm:iosum-vset},
and (B) follows from Equation \eqref{eq:sfbou70oiwry}.
Also, we have
\begin{align}
\vset\left( \st4 \osum_{\st 1} \st 6 \right)
&\eqlab{A}= (\vset(\st 4) +  \grundy(\st 1)) \osum \vset(\st 6)\\
&= (\{0, 1, 2, 3\} + 1) \osum \{0, 1, 2, 3, 4, 5\}\\
&= \{1, 2, 3, 4\} \osum \{0, 1, 2, 3 ,4, 5\}\\
&= \{0, 1, 2, 3, 4, 5, 6, 7, 8, 9\}, \label{eq:m0ihic0wj5xe}
\end{align}
where 
(A) follows from Theorem \ref{thm:iosum-vset}.
Therefore, we obtain
\begin{align}
\lefteqn{\vset(H)}\\
&= \vset\left( ((\st 2 + \st 3) \osum_{\st 2} \st 5) + (\st 4 \osum_{\st 1} \st 6) \right)\\
&\eqlab{A}= \vset\left( (\st 2 + \st 3) \osum_{\st 2} \st 5\right) \oplus \grundy\left(\st 4 \osum_{\st 1} \st 6 \right)
\cup \vset\left(\st 4 \osum_{\st 1} \st 6 \right) \oplus \grundy\left( (\st 2 + \st 3) \osum_{\st 2} \st 5\right)\\
&\eqlab{B}= \{0, 1, 2, 3, 4, 5, 6, 7\} \oplus 10 \cup \{0, 1, 2, 3, 4, 5, 6, 7, 8, 9\} \oplus 8\\
&= \{10, 11, 8, 9, 14, 15, 12, 13\} \cup \{8, 9, 10, 11, 12, 13, 14, 15, 0, 1\}\\
&= \{0, 1, 8, 9, 10, 11, 12, 13, 14, 15\}, \label{eq:pib1fq4b18dt}
\end{align}
where
(A) follows from Proposition \ref{prop:vset-sum} (i),
and (B) follows from Equations \eqref{eq:rqo5aj5tvp8u} and \eqref{eq:m0ihic0wj5xe}.
On the other hand, we have
\begin{align}
\vset(\st 5 + \st 4)
&\eqlab{A}= \vset(\st 5) \oplus \grundy(\st 4) \cup  \vset(\st 4) \oplus \grundy(\st 5)\\
&= \{0, 1, 2, 3, 4\} \oplus 4 \cup  \{0, 1, 2, 3\} \oplus 5\\
&= \{4, 5, 6, 7, 0\} \cup \{5, 4, 7, 6\}\\
&= \{0, 4, 5, 6, 7\}, \label{eq:1hy0zxlijp6j}
\end{align}
where
(A) follows from Proposition \ref{prop:vset-sum} (i).
Hence, 
\begin{align}
\vset\left( (\st 5 + \st 4) \osum_{\st 2} \st 2 \right)
&\eqlab{A}= (\vset(\st 5 + \st 4) +  \grundy(\st 2)) \osum \vset(\st 2)\\
&\eqlab{B}= (\{0, 4, 5, 6, 7\} + 2) \osum \{0, 1\}\\
&= \{2, 6, 7, 8, 9\} \osum \{0, 1\}\\
&= \{0, 1, 2, 6, 7, 8, 9\}, \label{eq:mfowjxaz37at}
\end{align}
where
(A) follows from Theorem \ref{thm:iosum-vset},
and (B) follows from Equation \eqref{eq:1hy0zxlijp6j}.
Therefore, we obtain
\begin{align}
\grundy(\widehat{H})
&= \grundy\left( ((\st 5 + \st 4) \osum_{\st 2} \st 2) + (\st 3 \osum_{\st 1} \st 1) \right)\\
&\eqlab{A}= \grundy\left( (\st 5 + \st 4) \osum_{\st 2} \st 2\right) \oplus \grundy\left(\st 3 \osum_{\st 1} \st 1 \right)\\
&\eqlab{B}= 3 \oplus 4\\
&= 7, \label{eq:cucuwedv6to5}
\end{align}
where
(A) follows from Proposition \ref{prop:grundy} (ii),
and (B) follows from Equation \eqref{eq:mfowjxaz37at} and Theorem \ref{thm:iosum-star}.
Combining the results above, we finally obtain
\begin{align}
\vset\left(\add{\st 1}{H}{\widehat{H}} \right)
&\eqlab{A}= (\vset(\st 1) +  \grundy(\widehat{H})) \osum \vset(H)\\
&\eqlab{B}= (\{0\} + 7) \osum \{0, 1, 8, 9, 10, 11, 12, 13, 14, 15\}\\
&= \{7\} \osum \{0, 1, 8, 9, 10, 11, 12, 13, 14, 15\}\\
&= \{0, 1, 7, 9, 10, 11, 12, 13, 14, 15, 16\},
\end{align}
where
(A) follows from Theorem \ref{thm:iosum-vset},
and (B) follows from Equations \eqref{eq:pib1fq4b18dt} and \eqref{eq:cucuwedv6to5}.
In particular, we have $\grundy(\add{\st 1}{H}{\widehat{H}}) = 2$ and $o\left(\add{\st 1}{H}{\widehat{H}} \right) = \NN$.
\end{example}

\end{document}